\newcommand{\bb}[1]{\mathbb{#1}}
\newcommand{\mbf}[1]{\mathbf{#1}}
\newcommand{\mc}[1]{\mathcal{#1}}
\newcommand*{\bmat}[1]{\begin{bmatrix}#1\end{bmatrix}}
\newcommand{\orth}{\textup{\texttt{orth}}}
\newcommand{\MATLAB}{\textsc{Matlab}\xspace}
\DeclareMathOperator{\rank}{rank}
\DeclareMathOperator{\Range}{Range}
\DeclareMathOperator{\myspan}{span}
\DeclareMathOperator*{\argmin}{argmin}
\title{Time-Domain Iterative Rational Krylov Method}
\author{Michael S. Ackermann\thanks{Department of Mathematics, Virginia Tech, Blacksburg , VA, 24061  (\email{amike98@vt.edu}).}
\and Serkan Gugercin \thanks{Department of Mathematics and Division of Computational Modeling and
    Data Analytics, Academy of Data Science, Virginia Tech, Blacksburg, VA 24061} (\email{gugercin@vt.edu}).}
\date{March 2023}
\begin{document}

\maketitle
\begin{abstract}
    The Realization Independent Iterative Rational Krylov Algorithm (TF-IRKA) is a frequency-based data-driven reduced order modeling (DDROM) method that constructs $\mathcal H_2$ optimal DDROMs. However, as the $\mathcal H_2$ optimal approximation theory dictates, 
   TF-IRKA requires repeated sampling of
  frequency data, that is, values of the system transfer function and its derivative, outside the unit circle. This repeated evaluation of frequency data requires repeated full model computations and may not be feasible. The data-informativity framework for moment matching provides a method for obtaining such frequency data from a single time-domain simulation. However, this framework usually requires solving linear systems with prohibitively ill-conditioned matrices, especially when recovering frequency data from off the unit circle as required for optimality.  
    In this paper, building upon our previous work with the data informativity framework for moment matching, we provide
    a formula for the nonzero extreme eigenvalues of a symmetric rank-$1$ perturbation to an orthogonal projection, which then leads to an optimal scaling of the aforementioned linear systems.  We also establish connections between the underlying dynamical system and conditioning of these linear systems.  This analysis then leads to our algorithmic development, time-domain IRKA, which allows us to implement a time-domain variant of TF-IRKA, constructing $\mathcal H_2$ optimal DDROMs from a single time-domain simulation without requiring repeated frequency data evaluations. The numerical examples illustrate the effectiveness of the proposed algorithm. 
\end{abstract}

\begin{keywords}
data informativity, rational interpolation, optimal data-driven modeling, transfer function, eigenvalues
\end{keywords}

\begin{MSCcodes}
    37M99, 41A20, 65F99, 93A15, 93B15, 93C80, 93-08
\end{MSCcodes}
\section{Introduction}
High-fidelity modeling of complex systems often results in large scale dynamical system models.  The time, energy, or storage cost of simulating these models may make them infeasable for their intended applications.  When this is the case, reduced order models (ROMs) are constructed and used in place of the full order model (FOM).  The goal of reduced order modeling is to find a ROM that well well approximates the FOM while requiring a fraction of the resources to simulate.

Classical model reduction techniques  take an intrusive approach where one assumes to have access to a realization of the FOM that fully represents the dynamics of the system.  These approaches are broadly called ``projection-based methods" as they project the dynamics of the FOM to a smaller subspace that captures most of the energy of the system, see e.g.,  \cite{Antoulas2005ApproxDynamSys,AntoulasBG2020Book,BennerGW2015SurveyProjMethods,DrohmannHO2012ReducedBasis,Moore1981PrincipleComponentAnalysis,QuarteroniMN2015RedBasisBook,RozzaHP2008ReducedBasis}. 
While these approaches have had great success in producing highly accurate ROMs, they are limited in applicability by assuming access to a realization of the FOM.  In some scenarios, such a realization may be unavailable, motivating the need for data-driven approaches.

Data-driven reduced order modeling does not assume access to a realization of the FOM, but rather assumes input-output data in the time or frequency domain is available, see, e.g., \cite{AntoulasBG2020Book,BerlijafaG2017RKFIT,conniAAARationalApproximation2024,DramacGB2015QuadVF,GustavsenS1998VFOrig,GrivetTalocia2004TimeDomainVF,hokansonM2018RationalLS,MayoA2007LoewnerFramework,NakatsukasaST2018AAA,semlyenG1999VF,MliG23,GosWG24}.  Many of the classical projection-based techniques such as optimal interpolatory projections and balanced truncation have been reformulated in this new input-output data-driven setting, see, e.g., \cite{BeattieG2012TFIRKA,GoseaGB2021QuadBT,MayoA2007LoewnerFramework}. We note that there are also data-driven methods that assume access to internal state measurements in addition to input-output measurements we consider here, see e.g., \cite{bruntonK2019DataDrivenSEbook, Hokanson2017ProjNonlinLS,KutzBBP2016DMDBook, Moore1981PrincipleComponentAnalysis,QianPW2020Lift&Learn,RajendraB2020DLROM,Schmid2022DMD}.  In our work we do not assume to know the internal state variable, therefore these methods are not considered here.

In the present work, we focus on linear, time invariant (LTI) systems. These systems typically arise from discretization of partial differential equations (PDEs). They can occur in the form of continuous-time LTI systems (semi-discretized PDEs) or in the form of discrete-time LTI systems (fully-discretized).  In this work, we consider discrete-time LTI systems, but remark that any continuous time LTI system can be transformed to a discrete-time system via e.g., a backwards stable interactive method; see \cite{Antoulas2005ApproxDynamSys} for details.

Both continuous and discrete LTI systems can be studied in either the time-domain or frequency-domain.  One may switch between these equivalent descriptions via the Laplace transform if the system is continuous or the Z-transform if the system is discrete. The mapping from input to output in the frequency domain is called the transfer function of the underlying system and plays a fundamental role in constructing reduced models.

Frequency based reduced order modeling techniques have had great success in modeling LTI systems.  Indeed, optimal approximation of LTI systems in the $\mc H_2$ norm requires interpolation in the frequency domain.  If one has access to a realization of the FOM (i.e., access to full-order operators), one method to compute a locally $\mc H_2$ optimal ROM is by the Iterative Rational Krylov Algorithm (IRKA) \cite{GugercinAB2008H2}.  If, on the other hand, one has only access to samples of the transfer function and its derivative, Realization Independent (TF)-IRKA \cite{BeattieG2012TFIRKA}, a data-driven formulation of IRKA, may be used.    
However, in some scenarios, neither a realization of the FOM nor direct measurements of the transfer function are available. Instead, one may be given access to only time-domain output data for a given input.
The purpose of this paper is to extend TF-IRKA algorithm to be compatible with time-domain data so that one may construct  $\mc H_2$-optimal reduced models using only a single time-domain trajectory. 

The first step of such a task is to develop a method to infer frequency information from available time-domain information.  One such method was proposed in \cite{burohmanBSC2020informativity}, where the authors provided the theoretical conditions that time-domain data must satisfy for one to infer the value of the FOM transfer function at some $\sigma \in \bb C$.  Despite its strong theoretical foundation, applying this framework in practical scenarios required careful analysis of the underlying numerical schemes. Our recent work~\cite{AckermannG2023FreqROMTD} 
has provided such an analysis and extended upon the contributions of  \cite{burohmanBSC2020informativity}, leading to a robust numerical framework for calculating values of the frequency response of the FOM from a single time-domain simulation.

Despite the contributions of \cite{AckermannG2023FreqROMTD}, extending TF-IRKA to work with time-domain data raises many theoretical and numerical complications.  In particular, TF-IRKA requires recovering values and derivatives of the transfer function at $\sigma \in \bb C$ where $|\sigma| > 1$, which introduces additional ill-conditioning in linear systems to be solved.  This paper addresses these complications, building on the work of \cite{AckermannG2023FreqROMTD} to provide an improved method for calculating transfer function values from time-domain data, as well as the Time-Domain (TD)-IRKA algorithm, an algorithm that constructs $\mc H_2$ optimal ROMs directly from time-domain data.  The main contributions of this paper are as follows:
\begin{itemize}
    \item \Cref{sec:explicitCondNumForm} provides a formula for the condition number of any sub-unitary matrix appended by an additional column $\bmat{\mbf Q &\mbf z}$ by finding an explicit formula for the eigenvalues of $\mbf Q\mbf Q^* + \mbf z \mbf z^*$.  This formula is then used to determine the optimal preconditioner of a certain class for linear systems involved in calculating frequency information from time-domain data
    \item \Cref{sec:CondNumFromSysProps} proves a relationship between the underlying FOM and the condition number of the linear system used to recover frequency information from the given time-domain data.  This formula is further analyzed to reveal more methods for reducing the linear system condition number.
    \item \Cref{sec:TD-IRKA} develops the Time-Domain (TD)-IRKA algorithm by incorporating the results of \cref{sec:Conditioning} and providing additional implementation details.
    \item \Cref{sec:results} demonstrates the effectiveness of TD-IRKA on several benchmark examples.
\end{itemize}
We begin with \cref{sec:preliminaries}, which introduces several necessary preliminaries.
\section{Preliminaries}
\label{sec:preliminaries}
Let $\mc S$ be an order $n$ discrete-time, asymptotically stable, single-input-single-output (SISO) dynamical system with the state-space form
\begin{equation}
    \label{eq:LinDiscSys}
    \mathcal S: \left \{
    \begin{aligned}
    \mbf E \mbf x[k+1] &= \mbf A\mbf x[k] + \mbf bu[k]\\
    y[k+1] &= \mbf c^{\top}\mbf x[k],\\
    \end{aligned} \right.
\end{equation}
where $\mbf E \in \bb R^{n\times n}$ is nonsingular, $\mbf A \in \bb R^{n \times n}, \,\mbf b \in \bb R^n,\, \mbf c^{\top} \in \bb R^{1\times n}$;
$\mbf x[k] \in \bb R^n$ is the state of \cref{eq:LinDiscSys} at time $k$; $u[k] \in \bb R$ is the input at time $k$; and $y[k]\in \bb R$ is the  output at time  $k$.
By taking the $Z$-transform of dynamics in~\cref{eq:LinDiscSys}, one obtains the transfer function of $\mathcal S$:
\begin{equation}
    \label{eq:TranferFuncDef}
    H(z) = \mbf c^{\top}(z\mbf E-\mbf A)^{-1}\mbf b,
\end{equation}
which is an order $n$ rational function in $z$. 
The asymptotic stability of $\mathcal S$ means that the 
eigenvalues of the matrix pencil $\lambda \mbf E - \mbf A$, i.e., the poles of $H(z)$, lie in the open unit disk. For $r < n$, an order $r$ reduced order model of \cref{eq:LinDiscSys} is given by
\begin{equation}
    \label{eq:LinDiscSys_rom}
    \widehat{\mathcal S}: \left \{
    \begin{aligned}
    \mbf E_r \mbf x_r[k+1] &= \mbf A_r\mbf x_r[k] + \mbf b_ru[k]\\
    y_r[k+1] &= \mbf c_r^{\top}\mbf x_r[k],\\
    \end{aligned} \right.
\end{equation}
where $\mbf E_r \in \bb R^{r\times r}$ is nonsingular, $\mbf A_r \in \bb R^{r \times r}, \,\mbf b_r \in \bb R^r,\, \mbf c_r^{\top} \in \bb R^{1\times r}$;
$\mbf x_r[k] \in \bb R^r$ is the \emph{reduced} state of \cref{eq:LinDiscSys_rom} at time $k$; $u[k] \in \bb R$ is the input at time $k$; and $y_r[k]\in \bb R$ is the \emph{reduced} output at time $k$.  The transfer function of \cref{eq:LinDiscSys_rom} is given by the order $r$ rational function
\begin{equation}
    \label{eq:ROMTF}
    \widehat H(z) = \mbf c_r^{\top}(z\mbf E_r-\mbf A_r)^{-1}\mbf b_r.
\end{equation}

The rest of this section summarizes various mathematical preliminaries required to construct $\mc H_2$ optimal (defined in the next section) DDROMs of a system as in \cref{eq:LinDiscSys} from samples of the time-domain input and output:
{\small \begin{equation}
\label{eq:UY} \bb U = \begin{bmatrix}u[0] & u[1] & \ldots & u[T]\end{bmatrix}^{\top} \in \mathbb R^{T+1}  ~~\mbox{and}~~ \bb Y = \begin{bmatrix}y[0] & y[1] & \ldots & y[T]\end{bmatrix}^{\top} \in \mathbb R^{T+1},\end{equation}}

\noindent
respectively.  Specifically, we assume that the quantities $\mbf E, \mbf A,\mbf b,$ and $\mbf c$ of \cref{eq:LinDiscSys} and \cref{eq:TranferFuncDef} are unknown, but we have the freedom to choose an input $\bb U$ and observe the response of $\mc S$ when driven by $\bb U$, denoted by $\bb Y$.

\subsection{$\mathcal H_2$ optimality conditions}
\label{sec:H2OptConds}
For an order $n$ system $\mc S$ as in \cref{eq:LinDiscSys} with the associated transfer function $H(z)$ as in \cref{eq:TranferFuncDef}, its 
$\mathcal H_2$ norm is norm is defined as
\begin{equation}
    \|H\|_{\mathcal H_2} = \sqrt{\frac{1}{2\pi}\int_{-\pi}^{\pi} |H(e^{\mbf i \omega})|^2 d\omega},
\end{equation}
which is the $\mathcal L_2$ norm of the transfer function on the unit circle. The $\mathcal H_2$ norm provides an upper bound for the $\mathcal L_\infty$ norm of the output $y(t)$; in other words
\begin{equation} \label{eq:h2bound}
  \| y\|_{{\mc L}_{\infty}}  \leq  \|H\|_{\mathcal H_2} 
  \| u\|_{{\mc L}_{2}}.
\end{equation}
Then, given $\mc S$ with transfer function $H(z)$,
 the $\mathcal H_2$ optimal reduced order modeling problem is to find a degree $r < n$ 
 model $\widehat{\mc S}$ with transfer function $\widehat H(z)$ that satisfies
\begin{equation}
    \label{eq:H2OptProblem}
    \widehat H = \argmin_{\widetilde{H}} \|H - \widetilde{H}\|_{\mathcal H_2},
\end{equation}
with $\widetilde H$ ranging over all degree $r$ rational functions with poles in the open unit disc. The output norm inequality \cref{eq:h2bound} provides the motivation for the $\mathcal H_2$ optimal reduced order modeling since~\cref{eq:h2bound} implies 
\begin{equation} \label{eq:h2Errbound}
  \| y - y_r\|_{{\mc L}_{\infty}}  \leq  \|H - \widehat H \|_{\mathcal H_2} 
  \| u\|_{{\mc L}_{2}},
\end{equation}
where $y_r$ is the output of $\mc S_r$.  
Therefore, minimizing $\| H - \widehat H \|_{{\mc H}_{2}}$ guarantees that $y_r$ is a high-fidelity approximation to $y$.

Minimizing \cref{eq:H2OptProblem} is a non-convex optimization problem, so the typically
approach is to seek locally optimal solutions.
There are equivalent characterizations of local $\mathcal H_2$ optimality via Sylvester equations and interpolation conditions \cite{GugercinAB2008H2,HylandB1985OptimalProj,MeierL1967InterpOptConds,wilson1970LyapunovConds}. 
The Sylvester equation framework requires access to the original state-space model. 
As we seek to construct DDROMs from data, we will utilize the interpolation based  (data-driven) optimality criteria: 
\begin{theorem}
    \label{thm:H2optCond}
    Let $\widehat H$ be a locally optimal order $r$ rational approximation to $H$ in the $\mathcal H_2$ norm.  Let $\{\lambda_i\}_{i = 1}^r$ be the poles of $\widehat H$.  Then $\widehat H$ is a Hermite interpolant to $H$ at $\frac{1}{\lambda_i}$.  That is, for each $i = 1,\ldots,r$,
    \begin{equation}
        \label{eq:H2optcond}
        \begin{aligned}
            \widehat H\left(\frac{1}{\lambda_i}\right) &= H\left(\frac{1}{\lambda_i}\right)\quad \mbox{and} \quad
            \widehat H'\left(\frac{1}{\lambda_i}\right) &= H'\left(\frac{1}{\lambda_i}\right),
        \end{aligned}
    \end{equation}
    where $H'$ and $\widehat H'$ denote the derivatives of $H$ and $\widehat H$ with respect to $z$.
\end{theorem}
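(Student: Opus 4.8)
The plan is to reduce \Cref{thm:H2optCond} to the first-order stationarity conditions of the squared $\mathcal H_2$ error written in pole--residue coordinates. In the generic case a locally optimal order-$r$ model $\widehat H$ is strictly proper with $r$ distinct, nonzero poles, so I would write $\widehat H(z)=\sum_{i=1}^{r}\phi_i/(z-\lambda_i)$ with $|\lambda_i|<1$ and the set $\{(\phi_i,\lambda_i)\}$ closed under complex conjugation (so that $\widehat H$ has real coefficients). These pairs are local coordinates on the set of degree-$r$ strictly proper rational functions with simple poles, so local $\mathcal H_2$-optimality of $\widehat H$ is equivalent to $\partial J/\partial\phi_k=\partial J/\partial\lambda_k=0$ for all $k$, where $J(\phi,\lambda)=\|H-\widehat H\|_{\mathcal H_2}^2=\|H\|_{\mathcal H_2}^2-2\langle H,\widehat H\rangle_{\mathcal H_2}+\|\widehat H\|_{\mathcal H_2}^2$.

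The key preliminary is a residue formula for the discrete-time $\mathcal H_2$ inner product. Since $H$ and $\widehat H$ have real coefficients, $\overline{\widehat H(e^{\mathbf i\omega})}=\widehat H(1/z)$ on $z=e^{\mathbf i\omega}$, whence $\langle H,\widehat H\rangle_{\mathcal H_2}=\frac{1}{2\pi\mathbf i}\oint_{|z|=1}H(z)\,\widehat H(1/z)\,z^{-1}\,dz$. Writing $\widehat H(1/z)\,z^{-1}=\sum_i\phi_i/(1-\lambda_i z)$, the only poles of the integrand inside the unit disk are the (stable) poles of $H$, while the poles $z=1/\lambda_i$ lie outside. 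As $H$ is strictly proper the integrand is $O(z^{-2})$ at infinity, so by the residue theorem the sum of the residues at the poles of $H$ equals minus the sum of the residues at the points $1/\lambda_i$, namely $\sum_i\phi_i H(1/\lambda_i)/\lambda_i$. Running the same argument with $\widehat H$ in place of $H$ (now picking up the poles $\lambda_i$) yields
\[
\langle H,\widehat H\rangle_{\mathcal H_2}=\sum_{i=1}^{r}\frac{\phi_i}{\lambda_i}H\!\Big(\tfrac{1}{\lambda_i}\Big),\qquad \|\widehat H\|_{\mathcal H_2}^2=\sum_{i=1}^{r}\frac{\phi_i}{\lambda_i}\widehat H\!\Big(\tfrac{1}{\lambda_i}\Big)=\sum_{i,j=1}^{r}\frac{\phi_i\phi_j}{1-\lambda_i\lambda_j},
\]
so that $J=\|H\|_{\mathcal H_2}^2-2\sum_i\lambda_i^{-1}\phi_i H(1/\lambda_i)+\sum_{i,j}\phi_i\phi_j/(1-\lambda_i\lambda_j)$.

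Differentiating is then mechanical. Using $\sum_j\phi_j/(1-\lambda_k\lambda_j)=\lambda_k^{-1}\widehat H(1/\lambda_k)$, the equation $\partial J/\partial\phi_k=0$ becomes $\lambda_k^{-1}\big(H(1/\lambda_k)-\widehat H(1/\lambda_k)\big)=0$, i.e.\ $\widehat H(1/\lambda_k)=H(1/\lambda_k)$. For $\partial J/\partial\lambda_k=0$, the cross term contributes a multiple of $\phi_k\,\frac{d}{d\lambda_k}\big[\lambda_k^{-1}H(1/\lambda_k)\big]$ and, since $(1-xy)^{-1}$ is symmetric, the quadratic term contributes $2\phi_k\sum_j\phi_j\lambda_j/(1-\lambda_k\lambda_j)^2$; using $\lambda_j/(1-\lambda_k\lambda_j)^2=\lambda_k^{-1}\big[(1-\lambda_k\lambda_j)^{-2}-(1-\lambda_k\lambda_j)^{-1}\big]$ together with $\sum_j\phi_j/(1-\lambda_k\lambda_j)^2=-\lambda_k^{-2}\widehat H'(1/\lambda_k)$ and the identity above, the condition reduces (after dividing by $2\phi_k$) to
\[
\lambda_k^{-2}\big(H(1/\lambda_k)-\widehat H(1/\lambda_k)\big)+\lambda_k^{-3}\big(H'(1/\lambda_k)-\widehat H'(1/\lambda_k)\big)=0 .
\]
Together with the already-established value condition, this forces $\widehat H'(1/\lambda_k)=H'(1/\lambda_k)$, which is \cref{eq:H2optcond}.

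The arithmetic above is routine; the main obstacle is the bookkeeping around the genericity hypotheses. I would need: (i) that the pole--residue parametrization is a valid chart at the optimum, so that parameter-stationarity is genuinely equivalent to local $\mathcal H_2$-optimality (and that a local optimum may be taken strictly proper, a feedthrough term decoupling and being minimized at zero); (ii) that at a true degree-$r$ optimum every residue $\phi_k$ is nonzero (else $\widehat H$ has degree $<r$) and every $\lambda_k\ne0$ (else $1/\lambda_k=\infty$); and (iii) a treatment of repeated poles, where \cref{eq:H2optcond} must be promoted to higher-order osculation and the residue formula replaced by its confluent limit. One could alternatively transplant the classical continuous-time Meier--Luenberger conditions through a Cayley transform mapping the unit disk to a half-plane, but then the transform's weighting factor must be tracked through the $\mathcal H_2$ norm, which is comparably delicate; I would keep the direct residue-calculus derivation as the primary route.
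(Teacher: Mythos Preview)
The paper does not actually prove this theorem: immediately after the statement it writes ``For a proof of \cref{thm:H2optCond} and its relation to other equivalent optimality conditions, see \cite{AntoulasBG2020Book,GugercinAB2008H2}.'' So there is no in-paper argument to compare against; the result is quoted from the literature.

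That said, your derivation is correct and is precisely the standard Meier--Luenberger route taken in those references, specialized to the discrete-time setting: express the $\mathcal H_2$ inner product as a contour integral, evaluate it by residues to obtain $\langle H,\widehat H\rangle_{\mathcal H_2}=\sum_i \lambda_i^{-1}\phi_i H(1/\lambda_i)$ and $\|\widehat H\|_{\mathcal H_2}^2=\sum_{i,j}\phi_i\phi_j/(1-\lambda_i\lambda_j)$, then set the gradients in the pole--residue coordinates to zero. Your algebraic identities (e.g., $\sum_j\phi_j/(1-\lambda_k\lambda_j)^2=-\lambda_k^{-2}\widehat H'(1/\lambda_k)$ and the splitting of $\lambda_j/(1-\lambda_k\lambda_j)^2$) are exactly what is needed, and the caveats you list---simple poles, nonzero residues, $\lambda_k\neq 0$, and the confluent case for repeated poles---are the right ones to flag. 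In short, your proposal reproduces the proof the paper cites rather than deviating from it.
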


For a proof of \cref{thm:H2optCond} and its relation to other equivalent optimality conditions, see \cite{AntoulasBG2020Book,GugercinAB2008H2}. These optimality conditions do not require $H(z)$ to be a rational function as in \eqref{eq:TranferFuncDef}; it just needs to be a $\mathcal{H}_2$ function.  Only $\widehat{H}(z)$ is required to be a rational function. 
Note that in this interpolatory setting 
``data'' correspond to evaluations of $H(z)$ and $H'(z)$ at some frequencies $z=\sigma_i$.
Therefore,  
to construct a data-driven locally optimal reduced model $\widehat H$, a method to construct rational Hermite interpolants to $H$ is required
given the evaluations of $H(\sigma_i)$ and $H'(\sigma_i)$.  Further, the optimal interpolation points $\sigma_i$'s are the reciprocal of the \emph{reduced} system poles, which are not known a priori. Thus, an iterative algorithm to converge to the optimal model from some initial set of interpolation points is needed.  We first show how to construct Hermite interpolants from data in \cref{sec:HerLowInterp}, and then summarize the transfer function iterative rational Krylov algorithm (TF-IRKA) \cite{BeattieG2012TFIRKA}, an algorithm to find an optimal reduced model from data samples $H(\sigma_i)$ and $H'(\sigma_i)$ in \cref{sec:TF-IRKA}.

\subsection{Data-driven Hermite interpolation}
\label{sec:HerLowInterp}
Given $r$ distinct points $\{\sigma_i\}_{i=1}^r \subset \bb C$ and corresponding values $\{H(\sigma_i)\}_{i=1}^r$ and derivatives $\{H'(\sigma_i)\}_{i=1}^r$, there exists an (at most) order $r$ rational function $H_r$ that interpolates the data, i.e.,
\begin{equation}
    \label{eq:hermcond}
    \widehat H(\sigma_i) = H(\sigma_i)\quad \text{and}\quad \widehat H'(\sigma_i) = H'(\sigma_i)\quad \text{for } i=1,2,\ldots,r.\end{equation}
This Hermite interpolant may be found via the \emph{Hermite Loewner framework} \cite{MayoA2007LoewnerFramework}, summarized below.

Given the data $\{H(\sigma_i)\}_{i=1}^r$ and $\{H'(\sigma_i)\}_{i=1}^r$ sampled at distinct $\sigma_i$, first form the Loewner matrix $\mbf L \in \bb C^{r \times r}$ and shifted Loewner matrix $\mbf M \in \bb C^{r \times r}$ as:
\begin{subequations}
    \parbox[c]{\textwidth*13/30}{
    \begin{equation}
    \label{eq:HermiteLowL}
    \mbf L_{ij} = 
    \begin{cases} 
      -\frac{H(\sigma_i) - H(\sigma_j)}{\sigma_i-\sigma_j}, & \text{if } i \neq j \\
      -H'(\sigma_i), & \text{if } i = j \\
   \end{cases}
   \end{equation}
   }
   \parbox[c]{\textwidth*15/30}{
    \begin{equation}
    \label{eq:HermiteLowM}
    \mbf M_{ij} = 
    \begin{cases} 
      -\frac{\sigma_iH(\sigma_i) - \sigma_jH(\sigma_j)}{\sigma_i-\sigma_j}, & \text{if } i \neq j \\
      -(H(\sigma_i) + \sigma_iH'(\sigma_i)), & \text{if } i = j. \\
   \end{cases}
   \end{equation}
   }
\end{subequations}

 \noindent
Let $\mbf q = \bmat{H(\sigma_1) & \ldots & H(\sigma_r)}$.  If the matrix pencil $z\mbf L-\mbf M$ is nonsingular for each $z = \sigma_i$, $i = 1, \ldots, r$, then the degree-$r$ transfer function $\widehat H(z) = \mbf q^{\top}(z\mbf L-\mbf M)^{-1}\mbf q$ interpolates the data. 
In other words, $\widehat H(z) =
\mbf c_r^{\top}(z\mbf E_r -\mbf A_r)^{-1}\mbf b_r$ with $\mbf E_r = \mbf L$, $\mbf A_r = \mbf M$, $\mbf b_r = \mbf c_r = \mbf q$ 
satisfies~\cref{eq:hermcond}.
In general, if the data samples and interpolation points are complex, $\mbf L,\mbf M$,  and $\mbf q$ will have complex entries.  In the case where the interpolation points $\sigma_i$ are closed under conjugation and the underlying system is real (as will be the case in this paper), a state transformation can be performed to obtain a real realization of the reduced model, see, e.g., \cite{AntoulasBG2020Book} for details.

\subsection{The Transfer Function Iterative Rational Krylov Algorithm}
\label{sec:TF-IRKA}
\cref{thm:H2optCond} provides a means to check if a given order $r$ rational function $\widehat H$ is an $\mathcal H_2$ optimal approximation to $H$, but does not provide a method for finding such a reduced model.  The Iterative Rational Krylov Algorithm (IRKA) \cite{GugercinAB2008H2} is a fixed point iteration that constructs a locally $\mathcal H_2$ optimal approximation to a given first-order linear dynamical system by iteratively updating interpolatory projections.  IRKA is well suited to large-scale problems and often converges rapidly to a local solution of \cref{eq:H2OptProblem} (see \cite{FlaggBG2012IRKAConverge}), but is restricted to first-order linear dynamical systems and requires an explicit realization of the system, that is, the matrices $\mbf E, \mbf A, \mbf b$, and $\mbf c$ in \cref{eq:TranferFuncDef} are assumed to be given.

The Realization Independent Iterative Rational Krylov Algorithm (TF-IRKA), \cref{alg:IRKA} extends IRKA to the case where an explicit realization of a system is unavailable, but transfer function value and derivative samples are available.  Thus, TF-IRKA is a data-driven approach as it does not require a realization of the system.  TF-IRKA also only assumes that the underlying transfer function is $\mathcal H_2$, not necessarily rational.
We provide an algorithmic summary of TF-IRKA in \cref{alg:IRKA}.  Upon convergence, \cref{alg:IRKA} returns matrices that define a locally $\mc H_2$ optimal reduced model to the full order model that produces the data samples.

\begin{algorithm}[!htb]
\caption{TF-IRKA}\label{alg:IRKA}
\begin{algorithmic}
\Require $\{\sigma_i\}_{i = 1}^r$, an initial set of interpolation points closed under conjugation (i.e., if $\sigma$ is an interpolation point, so is $\overline{\sigma}$).
\State Sample $H$ and $H'$ at $\{\sigma_i\}_{i = 1}^r$
\State Form $\mbf E_{r}$ and $\mbf A_{r}$ using $\sigma_i$, $H(\sigma_i)$, and $H'(\sigma_i)$ via Hermite Loewner interpolation.
\While{Not Converged}
\State Calculate generalized eigenvalues $\{\lambda_i\}_{i=1}^r$ of the pencil $\lambda\mbf E_{r} - \mbf A_{r}$
\State $\sigma_i \gets \frac{1}{\lambda_i}$
\State Resample $H$ and $H'$ at new $\{\sigma_i\}_{i = 1}^r$
\State Form new $\mbf E_{r} = \mbf L$ as in \cref{eq:HermiteLowL} and $\mbf A_{r} = \mbf M$ as in \cref{eq:HermiteLowM}  with $\sigma_i$, $H(\sigma_i)$, and $H'(\sigma_i)$.
\EndWhile
\State Form final $\mbf E_{r},\mbf A_{r},\mbf b_{r},\mbf c_{r}$ via Hermite Loewner interpolation (\cref{sec:HerLowInterp})
\end{algorithmic}
\end{algorithm}

TF-IRKA (\cref{alg:IRKA}) in its core is an 
iteratively corrected data-driven rational Hermite interpolant where interpolation points are updated in every step based on the optimality conditions~\eqref{eq:H2optcond}.  
Therefore, TF-IRKA
requires resampling $H$ and $H'$ at new interpolation points $\sigma_i$ at every step.  This may be challenging in practice because resampling may require extensive computations or another run of a physical experiment.  Additionally, since the poles of the reduced model should lie inside the unit circle, the optimal interpolation points, which are reciprocal of the poles, will lie outside the unit circle. Thus
\cref{alg:IRKA} and \cref{thm:H2optCond} require sampling of $H$ and $H'$ at $\sigma$ where $|\sigma| > 1$.  This data is difficult to obtain from physical set ups with currently available methods.  The next section summarizes a method to obtain $H(\sigma)$ and $H'(\sigma)$ for any $\sigma \in \bb C$ from a single time-domain simulation of the system.

\subsection{Recovering frequency information from time-domain data}
Frequency-based data-driven reduced order modeling methods require access to values and derivatives of the transfer function $H(z)$ given in~\cref{eq:TranferFuncDef}.  When one only has access to time-domain input-output data $(\bb U,\bb Y)$ as in~\eqref{eq:UY}, a method is needed to infer frequency information ($H(\sigma)$ and $H'(\sigma)$) from the available time-domain data.  In \cite{burohmanBSC2020informativity}, the authors presented a new method based on the data informativity framework \cite{vanWaardeECT2023DataInfSurvey,VanWaardeETC2020DataInform} to recover the values and derivatives of $H(z)$ from time-domain data at some specified $\sigma \in \bb C$.  In \cite{AckermannG2023FreqROMTD}, we provided an analysis of the method of \cite{burohmanBSC2020informativity}, which led to improved conditioning of associated linear systems
and an error indicator, among other contributions.  We briefly summarize some necessary results from those works here; for details we refer the reader to the original works~\cite{AckermannG2023FreqROMTD,burohmanBSC2020informativity}.

To simplify the notation, we define the Hankel matrix of depth $n$ from the input data $\bb U \in \bb R^{T+1}$ using the notation 
\begin{equation}
    \label{eq:hankelMatDef}
    \bb H_n(\bb U) = 
    \begin{bmatrix}
        u[0]&u[1]&\ldots &u[{T-n}]\\
        u[1]&u[2]&\ldots&u[{T-n+1}]\\
        \vdots&\vdots &\ddots & \vdots \\
        u[n] & u[{n+1}] & \ldots & u[T]
    \end{bmatrix} 
    \in \bb R^{(n+1) \times (T-n + 1)}.
\end{equation}
The Hankel matrix of depth $n$ from the input data $\bb Y \in \bb R^{T+1}$, $\bb H_n(\bb Y)$, is defined similarly.  We also define the vectors
\begin{center}
  \begin{subequations}
    \begin{equation}
        \label{eq:gamma_def}
        \gamma_n(\sigma) = \begin{bmatrix}1 & \sigma & \ldots & \sigma^n\end{bmatrix}^{\top}  \in \bb C^{n+1}~~\mbox{and}
    \end{equation}
    \begin{equation}
        \label{eq:gamma1_def}
        \gamma^{(1)}_n(\sigma) = \begin{bmatrix}0 & 1 & 2\sigma & \ldots & n\sigma^{n-1}\end{bmatrix}^{\top} \in \bb C^{n+1}.
    \end{equation}
    \end{subequations}
\end{center}
Finally, we define  $\mbf G_n \in \bb R^{2(n+1)\times(T-n+1)}$, and
$\mbf z(\sigma)$, $\mbf b(\sigma)$, $\mbf b^{(1)}(\sigma) \in \bb R^{2(n+1)}$:  
\begin{center}
  \begin{subequations}
    \parbox[c]{\textwidth*14/30}{
    \begin{equation}
        \label{eq:GnDef}
        \mbf G_n \coloneqq \bmat{\bb H_n(\bb U)\\ \bb H_n(\bb Y)} 
    \end{equation}
    }
    \parbox[c]{\textwidth*14/30}{
    \begin{equation}
        \label{eq:zSigmaDef}
        \mbf z(\sigma) \coloneqq \bmat{\mbf 0 \\ -\gamma_n(\sigma)} 
    \end{equation}
    }
    
    \parbox[c]{\textwidth*14/30}{
    \begin{equation}
        \label{eq:bSigmaDef}
        \mbf b(\sigma) \coloneqq \bmat{\gamma_n(\sigma) \\ \mbf 0} 
    \end{equation}
    }
    \parbox[c]{\textwidth*14/30}{
    \begin{equation}
        \label{eq:bSigmaPrimeDef}
        \mbf b^{(1)}(\sigma) \coloneqq \bmat{\gamma^{(1)}_n(\sigma) \\ M_0\gamma_n^{(1)}(\sigma)},
    \end{equation}
    }
\end{subequations}  
\end{center}
where $\mbf 0$ is the zero vector of length $n+1$.  
The method to recover frequency information $H(\sigma)$ and $H'(\sigma)$ from time domain information $\bb U$ and $\bb Y$ presented in 
\cite{AckermannG2023FreqROMTD} (which is equivalent to the method of \cite{burohmanBSC2020informativity} in exact arithmetic)
is given in \cref{prop:ExistUniqueOrth}, which summarizes the results of Corrollaries 3.2 and 3.3 of \cite{AckermannG2023FreqROMTD}.
\begin{remark}
    Since this work is concerned with the computational recovery of quantities that have an exact value and these computations are assumed to be carried out on a machine with finite precision arithmetic, we find it necessary to distinguish between the true quantities and their numerically calculated values.  We denote the true value of a transfer function $H$ at $\sigma \in \bb C$ as $H(\sigma)$ and the calculated (inferred) value as $M_0$.  Similarly, we use $H'(\sigma)$ and $M_1$ for the derivatives.  We emphasize that in the results presented below, if computations are carried out in exact arithmetic, then $M_0 = H(\sigma)$ and $M_1 = H'(\sigma)$.  
\end{remark}
\begin{lemma}
    \label{prop:ExistUniqueOrth}
    For a given LTI system $\mc S$ as in~\eqref{eq:LinDiscSys},
    assume we have access to the input data $\bb U \in \bb R^{T+1}$ and its corresponding output data $\bb Y \in \bb R^{T+1}$ as in~\eqref{eq:UY}.  Let $n$ be the order of the system and let $\sigma, M_0 \in \mathbb C$.
    Let $\mbf U \in \bb R^{2(n+1)\times p}$
    be an orthonormal basis for the range of $\mbf G_{n}$ \cref{eq:GnDef}, where $p \leq T-n+1$.
    Then the data $(\bb U$,$\bb Y)$ are informative for interpolation at $\sigma$ (i.e., we may recover $M_0 = H(\sigma)$ from the data) if and only if there exists $\xi \in \bb C^{p}$ such that
    \begin{equation}
        \label{eq:CalcM0Orth}
        \bmat{\mbf U & \mbf z(\sigma)}\bmat{{\xi} \\ M_0} = \mbf b(\sigma).
    \end{equation}
    Further, let $M_1 \in \bb C$ and assume that the data is informative for interpolation at $\sigma$.  Then the data is informative for Hermite interpolation (i.e., we may recover $M_1 = H'(\sigma)$ from the data) if and only if there exists $\xi_1 \in \bb C^p$ such that
    \begin{equation}
        \label{eq:CalcM1Orth}
        \bmat{\mbf U & \mbf z(\sigma)}\bmat{\xi_1 \\ M_1} = \mbf b^{(1)}(\sigma).
    \end{equation}
    Moreover, the data $(\bb U,\bb Y)$ are informative for interpolation at $\sigma$ if and only if
    \begin{subequations}
        \begin{equation}
            \label{eq:ExistCondOrth}
            \rank\left(\bmat{\mbf U & \mbf z(\sigma) & \mbf b(\sigma)}\right) = \rank\left(\bmat{\mbf U & \mbf z(\sigma)}\right),~\mbox{and}
        \end{equation}
        \begin{equation}
            \label{eq:UniqueCondOrth}
            \rank\left(\bmat{\mbf U & \mbf z(\sigma)}\right) = \rank\left({\mbf U}\right) + 1,
        \end{equation}
    \end{subequations}
    and the data are informative for Hermite interpolation at $\sigma$ if and only if the data are informative for interpolation at $\sigma$ and
    \begin{equation}
        \label{eq:ExistCondDerOrth}
        \rank\left(\bmat{\mbf U & \mbf z(\sigma) & \mbf b^{(1)}(\sigma)}\right) = \rank\left(\bmat{\mbf U & \mbf z(\sigma)}\right).
    \end{equation}
\end{lemma}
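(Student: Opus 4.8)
The plan is to reduce the statement to the behavioral characterization of data informativity from \cite{burohmanBSC2020informativity} and then to a handful of elementary observations about ranges and ranks. First I would recall the definition: the data $(\bb U,\bb Y)$ are \emph{informative for interpolation at} $\sigma$ exactly when every order-$n$ realization $(\mbf E',\mbf A',\mbf b',\mbf c')$ capable of producing the trajectory $(\bb U,\bb Y)$ assigns the same value to its transfer function at $\sigma$, and that common value is named $M_0$. The bridge to $\mbf G_n$ in \eqref{eq:GnDef} is that each length-$(n+1)$ window of $(\bb U,\bb Y)$, i.e.\ each column of $\mbf G_n$, is a length-$(n+1)$ input/output trajectory of every consistent system; hence $\Range(\mbf G_n)$ lies in the restricted behavior of each consistent system, and conversely a system is consistent iff its restricted behavior contains $\Range(\mbf G_n)$.

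The heart of the argument, and the step I expect to be the main obstacle, is the following identification. In coordinates, the trajectory segment $\big((\sigma^k)_{k=0}^n,(M\sigma^k)_{k=0}^n\big)$ is exactly $\mbf b(\sigma) - M\,\mbf z(\sigma) = \sbmat{\gamma_n(\sigma)\\ M\gamma_n(\sigma)}$, by \eqref{eq:zSigmaDef}--\eqref{eq:bSigmaDef}. If $\mbf v = \sbmat{\mbf v_u\\\mbf v_y}\in\Range(\mbf G_n)^{\perp}$ and we set $p_{\mbf v}(z) = \mbf v_u^{\top}\gamma_n(z)$, $q_{\mbf v}(z) = \mbf v_y^{\top}\gamma_n(z)$, then $\mbf v^{\top}(\mbf b(\sigma)-M\,\mbf z(\sigma)) = p_{\mbf v}(\sigma) + M\,q_{\mbf v}(\sigma)$, so the exponential window is annihilated by $\mbf v$ iff $M = -p_{\mbf v}(\sigma)/q_{\mbf v}(\sigma)$, which (for $\sigma$ away from the poles) is precisely the transfer-function value at $\sigma$ of the consistent system that $\mbf v$ encodes. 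What must be made rigorous here is the two-way correspondence between $\Range(\mbf G_n)^{\perp}$ and the family of order-$n$ systems consistent with $(\bb U,\bb Y)$ — in particular that the set of transfer-function values at $\sigma$ over all consistent systems equals $\{\,M : \mbf b(\sigma)-M\,\mbf z(\sigma)\perp\Range(\mbf G_n)^{\perp}\,\}$ — with care taken for the degenerate cases ($q_{\mbf v}(\sigma)=0$, i.e.\ $\sigma$ near a pole; data that are not persistently exciting, so that $\Range(\mbf G_n)$ is a proper subspace of the restricted behavior). This is the content of \cite{burohmanBSC2020informativity}, sharpened in \cite{AckermannG2023FreqROMTD}. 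Granting it, ``all consistent systems agree at $\sigma$'' is equivalent to ``$\mbf b(\sigma)-M\,\mbf z(\sigma)\in\Range(\mbf G_n)=\Range(\mbf U)$ for a unique $M$'', and writing this membership as $\mbf U\xi = \mbf b(\sigma) - M_0\,\mbf z(\sigma)$ gives \eqref{eq:CalcM0Orth} with $M_0 = H(\sigma)$.

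The remaining steps are routine linear algebra. Solvability of $\bmat{\mbf U & \mbf z(\sigma)}\sbmat{\xi\\ M_0} = \mbf b(\sigma)$ for \emph{some} $(\xi,M_0)$ is equivalent to $\mbf b(\sigma)\in\Range\bmat{\mbf U & \mbf z(\sigma)}$, i.e.\ \eqref{eq:ExistCondOrth}; and the $M_0$-component of such a solution is uniquely determined iff $\mbf z(\sigma)\notin\Range(\mbf U)$ (otherwise $\mbf z(\sigma)=\mbf U\eta$ lets one trade $M_0$ against $\xi$), which, since $\mbf z(\sigma)\neq \mbf 0$ and $\mbf U$ has orthonormal columns, is \eqref{eq:UniqueCondOrth}; together these give the claimed rank characterization of informativity for interpolation. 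For the Hermite part I would differentiate the identity $\mbf U\,\xi(\sigma) = \mbf b(\sigma) - H(\sigma)\,\mbf z(\sigma)$ in $\sigma$ along the smooth selection $\xi(\sigma)=\mbf U^{\top}(\mbf b(\sigma)-H(\sigma)\,\mbf z(\sigma))$; since $\gamma_n'(\sigma)=\gamma_n^{(1)}(\sigma)$ by \eqref{eq:gamma_def}--\eqref{eq:gamma1_def}, the right-hand side has derivative $\sbmat{\gamma_n^{(1)}(\sigma)\\ H(\sigma)\gamma_n^{(1)}(\sigma)} - H'(\sigma)\,\mbf z(\sigma) = \mbf b^{(1)}(\sigma) - M_1\,\mbf z(\sigma)$ with $M_0=H(\sigma)$ in \eqref{eq:bSigmaPrimeDef} and $M_1 = H'(\sigma)$ (equivalently, $\mbf b^{(1)}(\sigma)-M_1\,\mbf z(\sigma)$ is the coordinate form of the $\sigma$-derivative of the exponential trajectory, namely $\big((k\sigma^{k-1})_{k=0}^n,\,(H'(\sigma)\sigma^k+H(\sigma)k\sigma^{k-1})_{k=0}^n\big)$). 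Hence $\xi_1:=\xi'(\sigma)$ solves \eqref{eq:CalcM1Orth}; conversely solvability of \eqref{eq:CalcM1Orth} for some $(\xi_1,M_1)$ is $\mbf b^{(1)}(\sigma)\in\Range\bmat{\mbf U & \mbf z(\sigma)}$, i.e.\ \eqref{eq:ExistCondDerOrth}, and uniqueness of the recovered $M_1$ is automatic once \eqref{eq:UniqueCondOrth} holds.
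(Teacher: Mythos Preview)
The paper does not prove this lemma itself; immediately after the statement it writes ``For a proof of \cref{prop:ExistUniqueOrth}, see \cite{AckermannG2023FreqROMTD}'' and notes that the result summarizes Corollaries~3.2 and~3.3 of that reference. There is therefore no in-paper argument against which to compare your sketch.

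That said, your outline is the right one and is in the spirit of the cited works: identify $\mbf b(\sigma)-M\,\mbf z(\sigma)=\sbmat{\gamma_n(\sigma)\\ M\gamma_n(\sigma)}$ with the length-$(n{+}1)$ window of the exponential trajectory, invoke the behavioral characterization of informativity from \cite{burohmanBSC2020informativity} to equate ``all consistent order-$n$ systems agree at $\sigma$'' with ``this window lies in $\Range(\mbf G_n)=\Range(\mbf U)$ for a unique $M$'', and then read off \eqref{eq:CalcM0Orth} and the rank conditions \eqref{eq:ExistCondOrth}--\eqref{eq:UniqueCondOrth} from elementary existence/uniqueness of linear systems. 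You correctly flag the non-trivial step --- the two-way passage between $\Range(\mbf G_n)^{\perp}$ and the family of consistent systems, together with the degenerate cases --- as the content that genuinely lives in the references.

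One caution on the Hermite part. Your primary route differentiates the identity $\mbf U\,\xi(\sigma)=\mbf b(\sigma)-H(\sigma)\,\mbf z(\sigma)$ in $\sigma$, which tacitly assumes it holds not only at $\sigma$ but on a neighborhood; equivalently, that informativity for interpolation persists near $\sigma$. Condition \eqref{eq:UniqueCondOrth} is open in $\sigma$, but \eqref{eq:ExistCondOrth} need not be when $\Range(\mbf G_n)$ is a proper subspace of the true restricted behavior, so this step requires justification. The parenthetical alternative you mention --- recognizing $\mbf b^{(1)}(\sigma)-M_1\,\mbf z(\sigma)$ directly as the window of the generalized-exponential (Jordan-chain) trajectory and rerunning the behavioral argument verbatim --- avoids this issue and is closer to how \cite{burohmanBSC2020informativity,AckermannG2023FreqROMTD} handle the derivative case.
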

For a proof of \cref{prop:ExistUniqueOrth}, see \cite{AckermannG2023FreqROMTD}. To recover $M_0 \approx H(\sigma)$ from $\bb U,\bb Y$ using the methodology of \cite{burohmanBSC2020informativity} and \cite{AckermannG2023FreqROMTD}, one first checks the rank conditions \cref{eq:ExistCondOrth} and \cref{eq:UniqueCondOrth}, then calculates $M_0$ from \cref{eq:CalcM0Orth}. The derivative $M_1$ is calculated in a similar way; see \cite{AckermannG2023FreqROMTD} for details.
\begin{remark}
    \label{rem:dontNeedTrueN}
    \cref{prop:ExistUniqueOrth} assumes that the underlying system order, $n$, is known.  In \cite{AckermannG2023FreqROMTD}, we demonstrated that one may replace `$n$' with any integer $\hat n \geq n$ in \cref{prop:ExistUniqueOrth} and still maintain the same theoretical solution for $M_0$. We refer the reader to \cite{AckermannG2023FreqROMTD} for further details and proof.  The implications of using $\hat n > n$ will be explored later in this work in \cref{sec:CondNumFromSysProps} and \cref{sec:systemIndependentAnalysis}.
\end{remark}

\cref{prop:ExistUniqueOrth} provides linear systems that need to be solved for frequency information $M_0 \approx H(\sigma)$ and $M_1 \approx H'(\sigma)$.  The numerical results of \cite{AckermannG2023FreqROMTD} show that when $\sigma$ is restricted to the unit circle, this linear system can typically be solved accurately in double precision floating point arithmetic.  However, as discussed in \cref{sec:H2OptConds}, if one wishes to construct $\mc H_2$ optimal DDROMs from the data $(\bb U,\bb Y)$ we will require sampling where $|\sigma| > 1$. The next section first demonstrates the need for additional consideration of the linear system \cref{eq:CalcM0Orth}, then provides the necessary analysis to remedy the issues that arise.

\section{Optimizing the condition number}
\label{sec:Conditioning}
Note that recovering the frequency domain information
$M_0$ and $M_1$ from data requires solving the linear systems~\cref{eq:CalcM0Orth} and~\cref{eq:CalcM1Orth} with the same coefficient matrix $\bmat{\mbf U & \mbf z(\sigma)}$. 
In this section, we first examine the condition number of this matrix (\cref{sec:motivation}).  We will illustrate that this condition number grows rapidly as $|\sigma|$ increases greater than $1$, which will precisely be the case in the $\mc H_2$ optimal modeling problem.  We then present an explicit formula for the condition number of matrices that are composed of a subunitary block and a single additional column by providing an explicit formula for the eigenvalues of a rank 1 update to an orthogonal projection (\cref{sec:explicitCondNumForm}).  This leads to an optimal scaling to minimize the condition number of $\bmat{\mbf U & \delta \, \mbf z(\sigma)}$ for $\delta \in \bb R$.  Finally, in \cref{sec:CondNumFromSysProps}, we provide a connection between properties of the underlying system that generated the data $(\bb U,\bb Y)$ and the conditioning of $\bmat{\mbf U & \mbf z(\sigma)}$, which again provides guidance on how one may reduce the condition number of $\bmat{\mbf U & \mbf z(\sigma)}$ further.  We stress that while our analysis leads to modifications of \cref{eq:CalcM0Orth} to reduce its condition number, all of our modifications maintain the same theoretical solutions to the linear systems~\cref{eq:CalcM0Orth} 
and~\cref{eq:CalcM1Orth}.

\subsection{Motivation}
\label{sec:motivation}
Recall from \cref{alg:IRKA} and \cref{thm:H2optCond} that sampling $H$ at $1/\lambda$, where $\lambda$ is an eigenvalue of the matrix pencil $z\mbf E_r-\mbf A_r$ is required to construct $\mc H_2$ optimal DDROMs of $H$.  If we assume that the reduced model $\widehat H$ is asymptotically stable, then $|1/\lambda| > 1$.
Recall from \cref{eq:CalcM0Orth} that to recover $M_0 \approx H(\sigma)$ from data $(\bb U,\bb Y)$, we must solve a linear system involving the matrix $\bmat{\mbf U & \mbf z(\sigma)}$, where $\mbf U$ is subunitary and $\mbf z(\sigma)$ is given in \cref{eq:zSigmaDef}.  Since $\mbf z(\sigma)$ contains the values $\sigma^k, k=0,1,\ldots,n$, when $|\sigma| > 1$ and $n$ is moderately large, we will have $\|\mbf z(\sigma)\| > \sigma^n \gg 1$.  So compared to the normalized columns of $\mbf U$, the vector $\mbf z(\sigma)$ is differently scaled.  Intuitively, this should lead to ill-conditioning of the matrix $\bmat{\mbf U & \mbf z(\sigma)}$.

\begin{figure}[!htb]
    \centering
    \includegraphics[width = .8\textwidth]{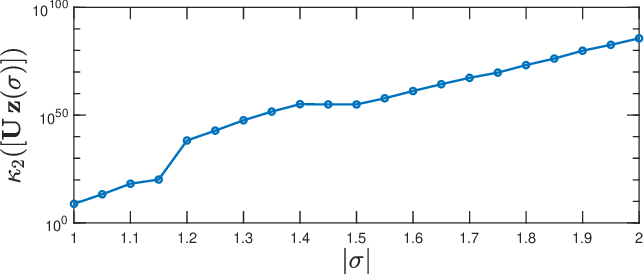}
    \caption{Condition number of $\bmat{\mbf U & \mbf z(\sigma)}$ grows rapidly with $|\sigma|$}
    \label{fig:ConditionVsRadius}
\end{figure}

\cref{fig:ConditionVsRadius} explores this issue numerically.  We consider the ISS1r system from \cite{Chahlaoui2002Benchmarks}.  This system has order $n = 270$, is time-continuous, and has 3 inputs and 3 outputs.  To adapt it to our SISO, discrete-time setting we consider only the first input and first output, and discretize the model with sampling frequency $10^2$ Hz with a zero-order hold method (\MATLAB's \texttt{c2d} command).  We call the resulting discrete-time SISO system $\mc S_1$ and its  transfer function $H_1(z)$.   We simulate $\mc S_1$ for $T = 3n$ time steps with the Gaussian random input $\bb U \in \bb R^{T+1}$ to obtain the output $\bb Y \in \bb R^{T+1}$. From this data we form $\mbf G_n$ as \cref{eq:GnDef} and set $\mbf U = \orth(\mbf G_n)$.  Then, for $\omega = 0.5$ and for each $d = 1,1.05,\ldots,2$, we set $\sigma = de^{\omega \mbf i}$ and form $\mbf z(\sigma)$ as in~\cref{eq:zSigmaDef}.  \cref{fig:ConditionVsRadius} shows how the condition number of $\bmat{\mbf U & \mbf z(\sigma)}$ varies with $|\sigma| = d$.  As expected, as $|\sigma|$ increases, the condition number of $\bmat{\mbf U & \mbf z(\sigma)}$ increases rapidly. Note that this value of $n = 270$ is quite small for many reduced order modeling problems, and for larger $n$, $\kappa_2(\bmat{\mbf U & \mbf z(\sigma)})$ increases even more rapidly than shown in \cref{fig:ConditionVsRadius}.

Clearly, if we are to sample $H$ and $H'$ at $|\sigma| > 1$, solving \cref{eq:CalcM0Orth} and \cref{eq:CalcM1Orth} as they are written are most likely produce inaccurate results especially when the data is noisy.  Since the cause of the ill-conditioning appears to be the scaling of $\mbf z(\sigma)$, a possible solution is to normalize $\mbf z(\sigma)$ as a preconditioning step.  \Cref{sec:explicitCondNumForm} expands upon this intuition, showing that normalizing $\mbf z(\sigma)$ does in fact provides the optimal scaling $\delta\mbf z(\sigma)$ that minimizes $\kappa_2(\bmat{\mbf U & \delta \mbf z(\sigma)})$.  This is accomplished by deriving an explicit formula for the condition number of any subunitary matrix appended by an extra column.

\subsection{An explicit condition number formula}

\label{sec:explicitCondNumForm}
We now present an explicit formula (\cref{prop:CondNumFormula}) for the condition number of any matrix with form
$\begin{bmatrix}\mbf Q & \mbf z\end{bmatrix},$
where $\mbf Q \in \bb C^{m \times n}$ is subunitary (so $n < m$) and $\mbf z \in \bb C^m$ is arbitrary.  We will also prove that choosing $\delta = 1/\|\mbf z\|$ minimizes the condition number
$\kappa_2\left(\begin{bmatrix}\mbf Q & \delta \mbf z\end{bmatrix}\right)$
over all numbers $\delta \in \bb R$.  Our strategy will be to analyze the eigenvalues of $\bmat{\mbf Q & \mbf z}\bmat{\mbf Q & \mbf z}^* = \mbf Q\mbf Q^*+\mbf z\mbf z^*$,
 and investigate how they change when $\|\mbf z\|$ is varied.

To begin, note that $\mbf Q \mbf Q^* + \mbf z \mbf z^* \in \bb C^{m\times m}$ has at most $n+1$ nonzero eigenvalues.  Indeed, if $\mbf z$ does not lie in the range of $\mbf U$, then $\mbf Q \mbf Q^* + \mbf z \mbf z^*$ has exactly $n+1$ nonzero eigenvalues.  Recognizing that $\mbf Q \mbf Q^* + \mbf z \mbf z^*$ is a rank 1 Hermitian perturbation of a Hermitian matrix, Weyl's theorem \cite{SymetricEigenProb_Bounds, weylAsymptotischeVerteilungsgesetzEigenwerte1912} provides bounds on the eigenvalues of $\mbf Q \mbf Q^* + \mbf z \mbf z^*$.

\begin{theorem}[Weyl's theorem \cite{SymetricEigenProb_Bounds, weylAsymptotischeVerteilungsgesetzEigenwerte1912}]
    \label{thm:WeylsThm}
    Let $\mbf A,\mbf B \in \bb C^{m \times m}$ be Hermitian and let $i,j = 1,2,\ldots,m$.  Then 
    \begin{subequations}
        \begin{equation}
            \label{eq:Weyl1}
            \lambda_{i + j-1}(\mbf A + \mbf B) \leq \lambda_i(\mbf A) + \lambda_j(\mbf B); \quad i + j \leq m+1, \quad \text{and}
        \end{equation}
        \begin{equation}
            \label{eq:Weyl2}
            \lambda_{i + j-m}(\mbf A + \mbf B)
            \geq 
            \lambda_i(\mbf A) + \lambda_j(\mbf B); \quad i + j  \geq m+1.
        \end{equation}
    \end{subequations}
    
\end{theorem}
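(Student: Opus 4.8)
The statement is the classical Weyl interlacing inequality for eigenvalues of Hermitian matrices, and the plan is to derive it from the Courant--Fischer min--max characterization of eigenvalues. Throughout I use the ordering convention that makes both displayed inequalities correct, namely $\lambda_1(\mbf M) \geq \lambda_2(\mbf M) \geq \cdots \geq \lambda_m(\mbf M)$ for Hermitian $\mbf M \in \bb C^{m\times m}$; with this convention
\[
\lambda_k(\mbf M) \;=\; \min_{\dim \mc W = m-k+1}\ \max_{0 \neq x \in \mc W}\frac{x^*\mbf M x}{x^*x} \;=\; \max_{\dim \mc V = k}\ \min_{0 \neq x \in \mc V}\frac{x^*\mbf M x}{x^*x}.
\]
The engine of the proof is the additivity $x^*(\mbf A+\mbf B)x = x^*\mbf A x + x^*\mbf B x$ of the Rayleigh numerator, combined with the elementary subspace dimension bound $\dim(\mc W_1 \cap \mc W_2) \geq \dim \mc W_1 + \dim \mc W_2 - m$.

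For \cref{eq:Weyl1}, fix $i,j$ with $i+j \leq m+1$. I would pick a subspace $\mc W_{\mbf A}$ of dimension $m-i+1$ attaining the min for $\lambda_i(\mbf A)$ and a subspace $\mc W_{\mbf B}$ of dimension $m-j+1$ attaining the min for $\lambda_j(\mbf B)$. The subspace bound gives $\dim(\mc W_{\mbf A}\cap\mc W_{\mbf B}) \geq m-(i+j-1)+1$, so I may choose $\mc W^\star \subseteq \mc W_{\mbf A}\cap\mc W_{\mbf B}$ with $\dim \mc W^\star = m-(i+j-1)+1$, where the hypothesis $i+j \leq m+1$ is exactly what keeps this dimension at least $1$. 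Every nonzero $x \in \mc W^\star$ satisfies $x^*\mbf A x/x^*x \leq \lambda_i(\mbf A)$ and $x^*\mbf B x/x^*x \leq \lambda_j(\mbf B)$; since $\mc W^\star$ is an admissible subspace in the min-formula for $\lambda_{i+j-1}(\mbf A+\mbf B)$, adding the two pointwise bounds yields $\lambda_{i+j-1}(\mbf A+\mbf B) \leq \max_{0\neq x\in\mc W^\star} x^*(\mbf A+\mbf B)x/x^*x \leq \lambda_i(\mbf A)+\lambda_j(\mbf B)$.

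Inequality \cref{eq:Weyl2} is the mirror statement, and I would obtain it either by applying \cref{eq:Weyl1} to $-\mbf A$ and $-\mbf B$ together with $\lambda_k(-\mbf M) = -\lambda_{m-k+1}(\mbf M)$ and re-indexing, or by repeating the argument with the max--min formula: take $\mc V_{\mbf A}$ of dimension $i$ and $\mc V_{\mbf B}$ of dimension $j$, whose intersection has dimension at least $i+j-m \geq 1$ under the hypothesis, and run the same Rayleigh-quotient comparison with the inequalities reversed. The only step requiring care — and essentially the sole obstacle in an otherwise routine argument — is the index bookkeeping: one must verify that the shifts $i+j-1$ and $i+j-m$ are precisely those forced by the intersection-dimension count, and that the hypotheses $i+j\leq m+1$ and $i+j\geq m+1$ are exactly the conditions keeping all the subspace dimensions in the admissible range $\{1,\dots,m\}$. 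Since the result is entirely classical, it could alternatively simply be cited, as the paper already does.
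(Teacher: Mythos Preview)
Your proof is correct and follows the standard Courant--Fischer route to Weyl's inequalities; the index bookkeeping and the role of the hypotheses $i+j\le m+1$ and $i+j\ge m+1$ are exactly as you describe. The paper itself does not prove this theorem at all---it simply states it and cites the references---so there is nothing to compare against, and your closing remark that one could alternatively just cite the result is precisely what the paper does.
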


Specializing $i = 1,2,\ldots,n$ and $j = 2$ in \cref{eq:Weyl1} to our setting shows that 
\begin{equation}
    \label{eq:WeylLowerBound}
    \lambda_k(\mbf Q\mbf Q^* + \mbf z \mbf z^*) \leq \lambda_i(\mbf Q\mbf Q^*) + \lambda_j(\mbf z \mbf z^*) = 1; \quad k = 2,3,\ldots,n+1.
\end{equation}
Specializing $i = 1,2,\ldots, n$ and $j = m$ in \cref{eq:Weyl2} shows that
\begin{equation}
    \label{eq:WeylUpperBound}
    \lambda_k(\mbf Q\mbf Q^* + \mbf z \mbf z^*) \geq \lambda_i(\mbf Q\mbf Q^*) + \lambda_j(\mbf z \mbf z^*) = 1; \quad k = 1,2,\ldots,n.
\end{equation}
Assuming that $\mbf z$ does not lie in the range of $\mbf Q$, \cref{eq:WeylLowerBound} and \cref{eq:WeylUpperBound} show that $\mbf Q\mbf Q^* + \mbf z \mbf z^*$ has exactly $n-1$ eigenvectors with eigenvalue 1, one eigenvector with eigenvalue at least 1, and one eigenvector with eigenvalue at most 1.  For the purpose of calculating the condition number of $\bmat{\mbf Q & \mbf z}$, we must find these extreme eigenvalues.  The following lemma shows where these two eigenvectors corresponding to the smallest and largest eigenvalue of $\mbf Q\mbf Q^* + \mbf z \mbf z^*$ must lie.

\begin{lemma}
    \label{lem:eigenvectorsInSpanUV}
    Let $\mbf Q \in \bb C^{m \times n}$ with $m>n$ be subunitary.  Let $\mbf z \in \bb C^{m}$.  Define
        $$\mbf u = \mbf Q\mbf Q^*\mbf z \in \bb C^{m}  \quad \text{and} \quad \mbf v = (\mbf I-\mbf Q\mbf Q^*)\mbf z\in \bb C^{m}$$
        so that $\mbf u + \mbf v = \mbf z$.  Assume $\mbf v \neq 0$ and $\mbf u \neq 0$ (i.e., $\mbf z$ is not in $\Range(\mbf Q)$ or orthogonal to $\Range(\mbf Q)$).
        Then the matrix $\mbf Q \mbf Q^*+\mbf z\mbf z^*$ has $n-1$ eigenvectors with eigenvalue $1$ that lie in $\Range(\mbf Q)\setminus\myspan(\mbf u)$ and  two (non-zero) extreme eigenvectors, ${\mbf x}_1$ and ${\mbf x}_{n+1}$, with the corresponding eigenvalues $\lambda_1 \geq 1$ and $0<\lambda_{n+1} \leq 1$, that lie in $\myspan\{\mbf u,\mbf v\}$.  The remaining $n-1$ eigenvalues are $0$.
\end{lemma}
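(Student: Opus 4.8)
The plan is to exhibit an explicit \emph{orthogonal} decomposition of $\bb C^m$ into three $(\mbf Q\mbf Q^*+\mbf z\mbf z^*)$-invariant subspaces and then read off the eigenstructure on each piece. Write $\mbf P := \mbf Q\mbf Q^*$, which, since $\mbf Q$ is subunitary, is the orthogonal projector onto $\mc R := \Range(\mbf Q)$ with $\dim\mc R = n$, and set $\mbf M := \mbf P + \mbf z\mbf z^*$. By construction $\mbf u = \mbf P\mbf z \in \mc R$, $\mbf v = (\mbf I-\mbf P)\mbf z \in \mc R^{\perp}$, with $\mbf u,\mbf v\neq \mbf 0$ and $\mbf u\perp\mbf v$; in particular $\mbf z\notin\mc R$, so $\mc R + \myspan(\mbf z)$ has dimension $n+1$. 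First I would record the decomposition
\begin{equation*}
  \bb C^m \;=\; \underbrace{\bigl(\mc R\cap\myspan(\mbf u)^{\perp}\bigr)}_{\mc W_1}\;\oplus\;\underbrace{\myspan\{\mbf u,\mbf v\}}_{\mc W_2}\;\oplus\;\underbrace{\bigl(\mc R+\myspan(\mbf z)\bigr)^{\perp}}_{\mc W_0},
\end{equation*}
with the three summands pairwise orthogonal and of dimensions $n-1$, $2$, and $m-n-1$ respectively (this is a short verification using $\mc R = (\mc R\cap\mbf u^{\perp})\oplus\myspan(\mbf u)$ and $\mc R^{\perp} = \myspan(\mbf v)\oplus(\mc R+\myspan\mbf z)^{\perp}$).

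The second step is to check that each $\mc W_j$ is $\mbf M$-invariant and to identify $\mbf M$ on it. For $\mbf w\in\mc W_1$ we have $\mbf w\in\mc R$ and $\mbf w\perp\mbf z$, hence $\mbf M\mbf w = \mbf P\mbf w + \mbf z(\mbf z^*\mbf w) = \mbf w$, so $\mbf M|_{\mc W_1} = \mbf I$. For $\mbf w\in\mc W_0$ we have $\mbf w\perp\mc R$ and $\mbf w\perp\mbf z$, hence $\mbf M\mbf w = \mbf 0$, so $\mbf M|_{\mc W_0} = \mbf 0$. Finally the identities $\mbf M\mbf u = \mbf u + (\mbf z^*\mbf u)\mbf z$ and $\mbf M\mbf v = (\mbf z^*\mbf v)\mbf z$, together with $\mbf z = \mbf u+\mbf v$, show $\mbf M(\myspan\{\mbf u,\mbf v\})\subseteq\myspan\{\mbf u,\mbf v\}$, so $\mc W_2$ is invariant. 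Since $\mbf M$ is Hermitian, hence unitarily diagonalizable, and the three invariant subspaces are orthogonal and span $\bb C^m$, its eigenvectors split accordingly: the $n-1$ eigenvectors with eigenvalue $1$ fill out $\mc W_1\subseteq\Range(\mbf Q)\setminus\myspan(\mbf u)$, the eigenvectors with eigenvalue $0$ fill out $\mc W_0$, and the two remaining eigenvectors $\mbf x_1$ and $\mbf x_{n+1}$ lie in $\mc W_2 = \myspan\{\mbf u,\mbf v\}$.

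It then remains to locate the two eigenvalues $\lambda_1,\lambda_{n+1}$ carried by $\mc W_2$. Positivity follows from $\mbf M\succeq\mbf 0$ together with nonsingularity of $\mbf M|_{\mc W_2}$: if $\mbf w\in\mc W_2$ and $\mbf M\mbf w = \mbf 0$, then $0 = \mbf w^*\mbf M\mbf w = \|\mbf P\mbf w\|^2 + |\mbf z^*\mbf w|^2$, and writing $\mbf w = \alpha\mbf u + \beta\mbf v$ the first term gives $\alpha\mbf u = \mbf P\mbf w = \mbf 0$, so $\alpha = 0$ since $\mbf u\neq\mbf 0$, after which the second gives $|\beta|\,\|\mbf v\|^2 = 0$, so $\beta = 0$ since $\mbf v\neq\mbf 0$; thus $\mbf w = \mbf 0$. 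For the ordering relative to $1$ I would simply invoke the Weyl bounds already recorded in \cref{eq:WeylLowerBound} and \cref{eq:WeylUpperBound}, which force $\lambda_2 = \cdots = \lambda_n = 1$ (realized on $\mc W_1$), $\lambda_1\geq 1$, and $0<\lambda_{n+1}\leq 1$; since the $n-1$ middle eigenvectors are exactly those already found in $\mc W_1$, the two extreme ones are $\mbf x_1,\mbf x_{n+1}\in\myspan\{\mbf u,\mbf v\}$, as claimed. The only mildly delicate point — and the step I would be most careful about — is the orthogonal direct-sum decomposition together with the verification that $\myspan\{\mbf u,\mbf v\}$ is genuinely $\mbf M$-invariant, so that it truly contains two eigenvectors rather than merely trapping the leftover spectral mass; once that is in hand the remainder is bookkeeping plus the one-line positive-definiteness check above.
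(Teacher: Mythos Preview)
Your proof is correct and reaches the same conclusion as the paper, but the logical organization differs in a way worth noting. The paper first locates the $n-1$ unit-eigenvalue eigenvectors in $\Range(\mbf Q)\cap\mbf u^{\perp}$ (exactly as you do), but for the two extreme eigenvectors it argues \emph{indirectly}: it computes $\mbf M\mbf u$ and $\mbf M\mbf v$ to show neither is an eigenvector, then separately verifies that both $\mbf u$ and $\mbf v$ lie in $\Range(\mbf M)$ (via the auxiliary combination $\mbf u/\|\mbf u\|^2 - \mbf v/\|\mbf v\|^2$), and concludes that since $\mbf u,\mbf v$ are in the range, orthogonal to the unit-eigenvalue eigenvectors, yet not eigenvectors themselves, they must span $\myspan\{\mbf x_1,\mbf x_{n+1}\}$. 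Your route instead shows \emph{directly} that $\myspan\{\mbf u,\mbf v\}$ is $\mbf M$-invariant and then reads off the spectrum from the block-diagonal action on the orthogonal decomposition $\mc W_1\oplus\mc W_2\oplus\mc W_0$; this is cleaner and bypasses the paper's range computation entirely, at the modest cost of verifying the decomposition and supplying the short nonsingularity check on $\mc W_2$ for strict positivity of $\lambda_{n+1}$. Both arguments rest on the same core computations of $\mbf M\mbf u$ and $\mbf M\mbf v$ and both invoke the Weyl inequalities for the ordering $\lambda_1\geq 1\geq\lambda_{n+1}$.
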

\begin{proof}
    Since $\mbf Q \mbf Q^*+\mbf z\mbf z^*$ is Hermitian, it is unitarily diagnalizable as 
    $\mbf X \Lambda \mbf X^*$ where
    $\mbf X \in \bb C^{m \times m}$ is unitary
    and $\Lambda \in \bb R^{m\times m}$ is diagonal.
    Since $\mbf v \neq 0$ and $\mbf Q$ is subunitary (hence full column rank), there are $n+1$ nonzero eigenvalues of $\mbf Q \mbf Q^*+\mbf z\mbf z^*$.
    Also since $\mbf Q$ has full column rank, we can find $n-1$ vectors $\mbf u_i \in \Range(\mbf Q)$ such that 
    $\mbf u_i^*\mbf u = 0,\quad i = 1,2,\ldots,n-1.$
    Since each $\mbf u_i \in \Range(\mbf Q)$, we also have
    $\mbf u_i^*\mbf v = 0$, for $i = 1,2,\ldots,n-1.$
    Note that each $\mbf u_i$ is an eigenvector of $\mbf Q \mbf Q^*+\mbf z\mbf z^*$ with the corresponding eigenvalue $\lambda_i = 1$:
    $$(\mbf Q \mbf Q^*+\mbf z\mbf z^*)\mbf u_i = (\mbf Q \mbf Q^*+(\mbf u + \mbf v)(\mbf u + \mbf v)^*)\mbf u_i = \mbf u_i.$$
    Thus we have found the $n-1$ eigenvectors with eigenvalue $\lambda = 1$ in $\Range(\mbf Q)\setminus\myspan(\mbf u)$, so the remaining 2 non-zero eigenvectors are $\mbf x_1$ and $\mbf x_{n+1}$ with eigenvalues $\lambda_1 \geq 1$ and $0 < \lambda_{n+1} \leq 1$ (by \cref{thm:WeylsThm}).  Now note that since $\mbf u \neq 0$ and $\mbf v \neq 0$, neither $\mbf u$ nor $\mbf v$ is an eigenvector of $\mbf Q \mbf Q^*+\mbf z\mbf z^*$:
    \begin{equation}
        (\mbf Q \mbf Q^*+\mbf z\mbf z^*)\mbf u = (\mbf Q \mbf Q^*+(\mbf u + \mbf v)(\mbf u + \mbf v)^*)\mbf u = \mbf u+\|\mbf u\|^2(\mbf u + \mbf v),
    \end{equation}
    \begin{equation}
        \label{eq:vNotEigVector}
        (\mbf Q \mbf Q^*+\mbf z\mbf z^*)\mbf v = (\mbf Q \mbf Q^*+(\mbf u + \mbf v)(\mbf u + \mbf v)^*)\mbf v = \|\mbf v\|^2(\mbf u + \mbf v).
    \end{equation}
    But both $\mbf u$ and $\mbf v$ are in $\Range(\mbf Q\mbf Q^* + \mbf z \mbf z^*)$. The manipulations
    \begin{equation}
        \label{eq:uInRange}
        (\mbf Q \mbf Q^*+\mbf z\mbf z^*)\left(\frac{\mbf u}{\|\mbf u\|^2} - \frac{\mbf v}{\|\mbf v\|^2}\right) = (\mbf Q \mbf Q^*+(\mbf u + \mbf v)(\mbf u + \mbf v)^*)\left(\frac{\mbf u}{\|\mbf u\|^2} - \frac{\mbf v}{\|\mbf v\|^2}\right) = \frac{\mbf u}{\|\mbf u\|^2},
    \end{equation}
    show $\mbf u \in \Range(\mbf Q\mbf Q^* + \mbf z \mbf z^*)$,
    and \cref{eq:vNotEigVector} and \cref{eq:uInRange} together show $\mbf v \in \Range(\mbf Q\mbf Q^* + \mbf z \mbf z^*)$.
    Since the $n-1$ eigenvectors of $\mbf Q \mbf Q^*+\mbf z\mbf z^*$ with eigenvalue 1 are orthogonal to $\mbf u$ and $\mbf v$, and both $\mbf u \in \Range(\mbf Q \mbf Q^*+\mbf z\mbf z^*)$ and $\mbf v \in \Range(\mbf Q \mbf Q^*+\mbf z\mbf z^*)$ while not being eigenvectors; they both must be in the span of $\mbf x_1$ and $\mbf x_{n+1}$.  Since $\mbf x_1$ and $\mbf x_{n+1}$ are eigenvectors of a diagonalizable matrix they are linearly independent, so
    $\myspan\{\mbf u,\mbf v\} = \myspan\{\mbf x_1, \mbf x_{n+1}\},$
    which implies
    $\mbf x_1 \in \myspan\{\mbf u,\mbf v\}$ and  $\mbf x_{n+1} \in \myspan\{\mbf u,\mbf v\}.$
\end{proof}
We now use \cref{lem:eigenvectorsInSpanUV} to prove an explicit formula for the condition number of $\bmat{\mbf Q & \mbf z}$. 
\begin{theorem} 
    \label{prop:CondNumFormula}
    Let $\mbf Q \in \mathbb C^{m \times n}$ with $ n< m$ be subunitary. Let $\mathbf z \in \mathbb C^m$, $\mathbf u = \mbf Q\mbf Q^*\mathbf z$, $\mathbf v = (\mathbf I-\mbf Q\mbf Q^*)\mathbf z$, and $\nu = \|\mathbf z\|$.  Assume $\|\mbf v\| \neq 0$ and $\|\mbf u\| \neq 0$.  Then 
    $\lambda_1 = \frac{1}{2}\left(1+ \nu^2 + \sqrt{1+\nu^4+2\nu^2-4\|\mathbf v\|^2}\right) \geq 1$ is the largest eigenvalue of 
    $\mbf Q\mbf Q^* + \mathbf z\mathbf z^*$ and $
       0 < \lambda_{n+1} = \frac{1}{2}\left(1+ \nu^2 - \sqrt{1+\nu^4+2\nu^2-4\|\mathbf v\|^2}\right) \leq 1$ is 
        the smallest nonzero eigenvalue. Thus,
    the matrix $\bmat{\mbf Q & \mbf z}$ the has condition number
    \begin{equation}
        \label{eq:K2EqExplicit}
        \kappa_2\left(\bmat{\mbf Q & \mbf z}\right) = \sqrt{\frac{{1+ \nu^2 + \sqrt{1+\nu^4+2\nu^2-4\|\mathbf v\|^2}}}{{1+ \nu^2 - \sqrt{1+\nu^4+2\nu^2-4\|\mathbf v\|^2}}}}.
    \end{equation}
    
\end{theorem}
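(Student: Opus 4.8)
The plan is to exploit \cref{lem:eigenvectorsInSpanUV}, which already pins the two extreme eigenvectors of $\mbf Q\mbf Q^* + \mbf z\mbf z^*$ to the two-dimensional subspace $\myspan\{\mbf u,\mbf v\}$, and then simply diagonalize the $2\times 2$ compression of $\mbf Q\mbf Q^* + \mbf z\mbf z^*$ to that subspace. Since $\mbf u\in\Range(\mbf Q)$ and $\mbf v=(\mbf I-\mbf Q\mbf Q^*)\mbf z\perp\Range(\mbf Q)$, we have $\mbf u\perp\mbf v$, so $\widehat{\mbf u}=\mbf u/\|\mbf u\|$ and $\widehat{\mbf v}=\mbf v/\|\mbf v\|$ are an orthonormal basis of $\myspan\{\mbf u,\mbf v\}$, and $\lambda_1,\lambda_{n+1}$ are exactly the eigenvalues of the Hermitian matrix $\widetilde{\mbf M}$ with entries $\widehat{\mbf w}_i^{\,*}(\mbf Q\mbf Q^*+\mbf z\mbf z^*)\widehat{\mbf w}_j$, $\widehat{\mbf w}_1=\widehat{\mbf u}$, $\widehat{\mbf w}_2=\widehat{\mbf v}$.

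The core step is to write $\widetilde{\mbf M}$ explicitly. I would use that $\mbf Q\mbf Q^*$ is the orthogonal projector onto $\Range(\mbf Q)$, so $\mbf Q\mbf Q^*\widehat{\mbf u}=\widehat{\mbf u}$ and $\mbf Q\mbf Q^*\widehat{\mbf v}=\mbf 0$, together with $\mbf z^*\mbf u=\|\mbf u\|^2$, $\mbf z^*\mbf v=\|\mbf v\|^2$ (again from $\mbf u\perp\mbf v$) and the decomposition $\mbf z=\|\mbf u\|\widehat{\mbf u}+\|\mbf v\|\widehat{\mbf v}$. A short computation of $(\mbf Q\mbf Q^*+\mbf z\mbf z^*)\widehat{\mbf u}$ and $(\mbf Q\mbf Q^*+\mbf z\mbf z^*)\widehat{\mbf v}$ then yields
\begin{equation}
    \widetilde{\mbf M}=\bmat{1+\|\mbf u\|^2 & \|\mbf u\|\,\|\mbf v\| \\ \|\mbf u\|\,\|\mbf v\| & \|\mbf v\|^2}.
\end{equation}
Since $\operatorname{trace}(\widetilde{\mbf M})=1+\|\mbf u\|^2+\|\mbf v\|^2=1+\nu^2$ and $\det(\widetilde{\mbf M})=(1+\|\mbf u\|^2)\|\mbf v\|^2-\|\mbf u\|^2\|\mbf v\|^2=\|\mbf v\|^2$, the eigenvalues are the roots of $\lambda^2-(1+\nu^2)\lambda+\|\mbf v\|^2=0$, i.e. $\lambda=\tfrac12\big(1+\nu^2\pm\sqrt{(1+\nu^2)^2-4\|\mbf v\|^2}\big)$, which is exactly the claimed pair of formulas once $(1+\nu^2)^2$ is expanded to $1+2\nu^2+\nu^4$.

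It then remains to check the sign/ordering claims and pass to $\kappa_2$. The radicand is nonnegative because $\|\mbf v\|^2\le\nu^2$ and $\nu^2\le\tfrac14(1+\nu^2)^2$ (equivalently $(1-\nu^2)^2\ge0$), so in fact $(1+\nu^2)^2-4\|\mbf v\|^2\ge(1-\nu^2)^2\ge0$; strict positivity of $\lambda_{n+1}$ follows from $\det\widetilde{\mbf M}=\|\mbf v\|^2>0$, and $\lambda_1\ge1\ge\lambda_{n+1}$ is inherited from \cref{thm:WeylsThm} as already recorded in \cref{eq:WeylLowerBound}--\cref{eq:WeylUpperBound}. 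Finally, because $\mbf v\neq\mbf 0$ the matrix $\bmat{\mbf Q & \mbf z}$ has full column rank $n+1$, so its squared singular values are precisely the $n+1$ nonzero eigenvalues of $\bmat{\mbf Q & \mbf z}\bmat{\mbf Q & \mbf z}^{*}=\mbf Q\mbf Q^*+\mbf z\mbf z^*$; hence $\kappa_2(\bmat{\mbf Q & \mbf z})=\sqrt{\lambda_1/\lambda_{n+1}}$, which is \cref{eq:K2EqExplicit}. The only genuine obstacle is the bookkeeping in computing $\widetilde{\mbf M}$ — one must carefully and consistently use $\mbf u\perp\mbf v$ and the projector identities $\mbf Q\mbf Q^*\mbf u=\mbf u$, $\mbf Q\mbf Q^*\mbf v=\mbf 0$; the quadratic-formula step, the discriminant bound, and the singular-value identification are all routine.
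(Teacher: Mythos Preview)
Your proposal is correct and follows essentially the same approach as the paper: both invoke \cref{lem:eigenvectorsInSpanUV} to confine the extreme eigenvectors to $\myspan\{\mbf u,\mbf v\}$ and then solve the resulting $2\times 2$ eigenvalue problem. The only difference is packaging: the paper writes the eigenvector as $\alpha\mbf u+\beta\mbf v$, expands the eigenvalue equation, and solves the resulting two scalar equations for $\lambda$; you instead pass to the orthonormal basis $\{\widehat{\mbf u},\widehat{\mbf v}\}$, write down the $2\times 2$ compression $\widetilde{\mbf M}$, and read off the eigenvalues via trace and determinant. Your route is a bit more streamlined algebraically, but the underlying idea is identical.
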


\begin{proof}
    We will consider the eigenvalues of $\bmat{\mbf Q & \mbf z}\bmat{\mbf Q & \mbf z}^* = \mbf Q\mbf Q^* + \mathbf z\mathbf z^*$.  First we expand
    \begin{equation}
        \begin{aligned}
            \mbf Q\mbf Q^*+\mathbf z\mathbf z^* &= \mbf Q\mbf Q^*+(\mathbf u+\mathbf v)(\mathbf u+\mathbf v)^*\\
            &=\mbf Q\mbf Q^*+\mathbf u\mathbf u^*+\mathbf v\mathbf v^*+\mathbf u\mathbf v^*+\mathbf v\mathbf u^*.
        \end{aligned}
    \end{equation}
    By \cref{lem:eigenvectorsInSpanUV}, 
    the eigenvectors of $\mbf Q\mbf Q^* + \mathbf z\mathbf z^*$ corresponding to the extreme eigenvalues $\lambda_1$ and $\lambda_{n_+1}$  are in $\myspan\{\mbf u,\mbf v\}$. So for some $\alpha_i \in \bb C$ and $ \beta_i \in \bb C, $ and $ \lambda_i \in \mathbb R$, for $i = 1$ and $i=n+1$, we have
    \begin{equation}
        \label{eq:aubvIsEigenvector}
        (\mbf Q\mbf Q^*+\mathbf u\mathbf u^*+\mathbf v\mathbf v^*+\mathbf u\mathbf v^*+\mathbf v\mathbf u^*)(\alpha_i \mathbf u+ \beta_i \mathbf v) = \lambda_i(\alpha_i \mathbf u+ \beta_i \mathbf v),
    \end{equation}
    i.e, $(\alpha_i \mathbf u+ \beta_i \mathbf v, \lambda_i)$ are eigenpairs for $i = 1$ and $i=n+1$. Note that since $\mbf u$ and $\mbf v$ are not eigenvectors, neither $\alpha_i$ nor $\beta_i$ are 0.  Now expanding \cref{eq:aubvIsEigenvector} (and dropping subscripts), we obtain
    \begin{equation}
        \label{eq:expandedAUBV}
        \alpha \mathbf u + \alpha \|\mathbf u\|^2\mathbf u + \alpha\|\mathbf u\|^2\mathbf v + \beta\|\mathbf v\|^2\mathbf v + \beta \|\mathbf v\|^2\mathbf u = \lambda(\alpha \mathbf u + \beta \mathbf v),
    \end{equation}
    and grouping the terms of \cref{eq:expandedAUBV} yields
    \begin{equation}  \label{eq:uandv}
        ((1-\lambda)\alpha + \alpha \|\mathbf u\|^2+\beta \|\mathbf v\|^2)\mathbf u + (\alpha \|\mathbf u\|^2 + \beta \|\mathbf v\|^2 - \lambda \beta)\mathbf v = 0.
    \end{equation}
    Since $\mathbf u$ and $\mathbf v$ are orthogonal to each other (and thus are linearly independent),~\eqref{eq:uandv} yields
    \begin{equation}
        \begin{aligned}
            \label{eq:AUBVSys}
            (1-\lambda)\alpha + \alpha \|\mathbf u\|^2+\beta \|\mathbf v\|^2  &= 0\qquad \mbox{and} \qquad \alpha \|\mathbf u\|^2 + \beta \|\mathbf v\|^2 - \lambda \beta &= 0.
        \end{aligned}
    \end{equation}
    Since an eigenvector can be scaled, we are free to  choose $\alpha$ or $\beta$.  Setting $\beta = 1$ and solving \cref{eq:AUBVSys} for $\lambda$ and $\alpha$ yields
    \begin{subequations}
        \begin{equation}
            \alpha = \frac{1}{2\|\mathbf u\|^2}\left(1-\|\mathbf v\|^2+\|\mathbf u\|^2\pm \sqrt{(\|\mathbf v\|^2-\|\mathbf u\|^2-1)^2+4\|\mathbf u\|^2\|\mathbf v\|^2}\right),~\mbox{and}
        \end{equation}
        \begin{equation}
            \label{eq:LamUnsimp}
            \lambda = \frac{1}{2}\left(1-\|\mathbf v\|^2+\|\mathbf u\|^2\pm \sqrt{(1-\|\mathbf v\|^2+\|\mathbf u\|^2)^2+4\|\mathbf u\|^2\|\mathbf v\|^2}\right) + \|\mathbf v\|^2.
        \end{equation}
    \end{subequations}
    Recalling that $\nu^2 = \|\mbf z\|^2 = \|\mbf u\|^2 + \|\mbf v\|^2$, simplifying \cref{eq:LamUnsimp} leads to
    \begin{equation}
        \label{eq:eigUUhxxh}
        \lambda = \frac{1}{2}\left(1+ \nu^2 \pm \sqrt{1+\nu^4+2\nu^2-4\|\mathbf v\|^2}\right).
    \end{equation}

    \cref{lem:eigenvectorsInSpanUV} guarantees that the eigenvalues in \cref{eq:eigUUhxxh} are the the extreme eigenvalues, $\lambda_1$ and $\lambda_{n+1}$, which proves the formulae for 
    $\lambda_1$ and $\lambda_{n+1}$. Thus, we obtain 
    \begin{equation}
        \label{eq:condNumFormula}
        \sqrt{\frac{\lambda_1}{\lambda_{n+1}}} = \kappa_2\left([~\mbf Q ~\mbf z]\right) = \sqrt{\frac{{1+ \nu^2 + \sqrt{1+\nu^4+2\nu^2-4\|\mathbf v\|^2}}}{{1+ \nu^2 - \sqrt{1+\nu^4+2\nu^2-4\|\mathbf v\|^2}}}},
    \end{equation}
    which proves the condition number formula.
\end{proof}
\begin{remark}
    The expression for the value of the smallest eigenvalue of $\mbf Q \mbf Q^* + \mbf z \mbf z^*$ appears similar to the lower bound for the smallest eigenvalue of a perturbed Hermitian matrix in \cite{ipsenRefinedPerturbationBounds2009}.  We emphasize that we have presented a formula for the \emph{exact values}  of the {\emph{both extreme non-zero} eigenvalues of matrices of the form $\mbf Q \mbf Q^* + \mbf z \mbf z^*$,} with subnitary $\mbf Q \in \bb C^{m \times n}$ with  $m>n$, while \cite{ipsenRefinedPerturbationBounds2009} presents \emph{upper and lower bounds} for eigenvalues of $\mbf A + \mbf z \mbf z^*$, where $\mbf A = \mbf A^* \in \bb C^{m \times m}$. Only for the very specific case of $\mbf Q \in \bb C^{m \times (m-1)}$, (corresponding to only the case of $n=m-1$ in our analysis), the lower bound of \cite{ipsenRefinedPerturbationBounds2009}
    agrees with the value presented in \cref{eq:eigUUhxxh}, yet  the upper bound is still pessimistic. 
 \end{remark}
Now consider a {fixed nonzero $\mbf z$}.  Then, independent of $\|\mbf z\|$, there exists $0 < \eta< 1$ such that $\|\mbf v\| = \eta\|\mbf z\| = \eta \nu$.  Substituting $\|\mbf v\| = \eta\nu$ in \cref{eq:condNumFormula} and squaring gives
\begin{equation}
    \label{eq:condNumSquared}
     \kappa^2_2\left([~\mbf Q ~\mbf z]\right) =
     K(\nu,\eta) = 
     \frac{1+ \nu^2 + \sqrt{1+\nu^4+2\nu^2-4\eta^2\nu^2}}{1+ \nu^2 - \sqrt{1+\nu^4+2\nu^2-4\eta^2\nu^2}}.
\end{equation}
Then differentiating with respect to $\nu$ gives
\begin{equation}
    \label{eq:condNumDerivative}
    \frac{\partial K}{\partial \nu}(\nu,\eta) = \frac{8\eta^2\nu(\nu^2-1)}{\sqrt{1+(2-4\eta^2)\nu^2 + \nu^4}\left(1+\nu^2-\sqrt{1+(2-4\eta^2)\nu^2 + \nu^4}\right)^2},
\end{equation}
which leads to \cref{prop:1isOptimal}.
\begin{corollary}
    \label{prop:1isOptimal}
    Let $\mbf Q \in \bb C^{m \times n}$ with $n < m$ be subunitary. Let $\mbf z \in \bb C^m$ be such that $\mbf z \not\in \Range(\mbf Q)$.  Then $\delta = \frac{1}{\|\mbf z\|}$ minimizes $\kappa_2(\bmat{\mbf Q & \delta \mbf z})$ over all $\delta \in \bb R$.
\end{corollary}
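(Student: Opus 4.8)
The plan is to reduce the claim to the one‑variable calculus fact already packaged in \cref{eq:condNumDerivative}. Fix a nonzero $\mbf z \notin \Range(\mbf Q)$, and treat first the generic case in which $\mbf u = \mbf Q\mbf Q^*\mbf z \neq 0$ as well, so that \cref{prop:CondNumFormula} is applicable to $\bmat{\mbf Q & \delta\mbf z}$ for every $\delta \neq 0$. The first step is to record how the relevant quantities scale: replacing $\mbf z$ by $\delta\mbf z$ sends $\mbf u \mapsto \delta\mbf u$, $\mbf v \mapsto \delta\mbf v$, and $\nu = \|\mbf z\| \mapsto \nu_\delta := |\delta|\,\|\mbf z\|$, while the ratio $\eta := \|\mbf v\|/\|\mbf z\|$ is unchanged and satisfies $0 < \eta < 1$ precisely because $\mbf v \neq 0$ and $\mbf u \neq 0$. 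Hence $\|\mbf v_\delta\| = \eta\,\nu_\delta$, and \cref{prop:CondNumFormula} together with \cref{eq:condNumSquared} gives $\kappa_2(\bmat{\mbf Q & \delta\mbf z})^2 = K(\nu_\delta,\eta)$.

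Next, since $\kappa_2(\bmat{\mbf Q & \delta\mbf z})$ depends on $\delta$ only through $|\delta|$, and since $\delta = 0$ produces a rank‑deficient matrix and hence an infinite condition number, it suffices to minimize $K(\nu,\eta)$ over $\nu \in (0,\infty)$ via the strictly increasing substitution $\nu = \delta\|\mbf z\|$, $\delta > 0$. I would then read off the sign of $\partial K/\partial\nu$ from \cref{eq:condNumDerivative}. The one point requiring a short argument is positivity of the denominator there: the radicand $1 + (2-4\eta^2)\nu^2 + \nu^4$ strictly exceeds $(1-\nu^2)^2 \ge 0$ whenever $\eta < 1$ and $\nu > 0$, so the square root is real and positive; and the remaining factor $1 + \nu^2 - \sqrt{1+(2-4\eta^2)\nu^2+\nu^4}$ equals $2\lambda_{n+1} > 0$ by \cref{prop:CondNumFormula}. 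With the denominator positive, the sign of $\partial K/\partial\nu$ equals that of $\nu(\nu^2-1)$, which is negative on $(0,1)$ and positive on $(1,\infty)$. Thus $K(\cdot,\eta)$ is strictly decreasing on $(0,1)$ and strictly increasing on $(1,\infty)$, so its unique minimizer is $\nu = 1$, i.e.\ $\delta = 1/\|\mbf z\|$.

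Finally I would dispose of the degenerate subcase $\mbf u = 0$ (that is, $\mbf z$ orthogonal to $\Range(\mbf Q)$), where \cref{prop:CondNumFormula} does not directly apply: then the columns of $\bmat{\mbf Q & \delta\mbf z}$ are mutually orthogonal, so its nonzero singular values are $1$ with multiplicity $n$ and $|\delta|\,\|\mbf z\|$, giving $\kappa_2 = \max\{1,|\delta|\,\|\mbf z\|\}/\min\{1,|\delta|\,\|\mbf z\|\}$, which is minimized (and equals $1$) exactly when $|\delta| = 1/\|\mbf z\|$. I do not expect a genuine obstacle in this proof; the only mildly delicate point is confirming the positivity of the denominator in \cref{eq:condNumDerivative}, which follows from the reality and positivity of the extreme eigenvalues already established in \cref{prop:CondNumFormula}, and everything else is bookkeeping about how $\mbf u$, $\mbf v$, and $\nu$ transform under $\mbf z \mapsto \delta\mbf z$.
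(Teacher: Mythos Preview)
Your proposal is correct and follows essentially the same approach as the paper: handle the orthogonal case directly, and in the generic case use the derivative formula \cref{eq:condNumDerivative} to see that the sign of $\partial K/\partial\nu$ is that of $\nu^2-1$, giving the unique minimizer $\nu=1$. Your version is somewhat more careful in places (explicitly excluding $\delta=0$, justifying positivity of the denominator via $2\lambda_{n+1}>0$, and computing the singular values directly in the orthogonal case), but the underlying argument is the same.
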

\begin{proof}
    If $\mbf z \perp \Range(\mbf Q)$, then normalizing $\mbf z$ leads to $\bmat{\mbf Q & \mbf z}$ being subunitary (or potentially unitary).  So $\kappa_2(\bmat{\mbf Q & \mbf z}) = 1$, which is clearly minimized.
    
    If $\mbf z \not\perp \Range(\mbf Q)$, then $\kappa_2(\bmat{\mbf Q & \mbf z})$ is given by \cref{eq:condNumFormula} and, when squared, has derivative \cref{eq:condNumDerivative}.
    Note that the denominator of \cref{eq:condNumDerivative} is always greater than zero, and $8\eta^2\nu$ is also always greater than zero by our assumptions.  So the sign of \cref{eq:condNumDerivative} is determined by 
    $\nu^2-1.$
    Since $\nu^2-1$ is zero if $\nu = 1$, negative if $0<\nu<1$, and positive for $\nu > 1$, the choice $\nu = 1$ is the only local minimum of \cref{eq:condNumFormula} for the feasible values of $\nu$ and $\eta$.
    Then since \cref{eq:condNumFormula} is decreasing for all allowed values of $\nu$ to the left of 1 and increasing for all $\nu > 1$, $\nu = 1$ is the global minimum.
\end{proof}
 In \cref{fig:ModulusVsCond_normalized} we illustrate the impact of the optimal scaling in~\cref{prop:1isOptimal}  by repeating the experiment in \cref{sec:motivation}, but recording the condition number  of $\bmat{\mbf U & \hat{\mbf z}(\sigma)}$ where $\hat{\mbf z}(\sigma) = \mbf z(\sigma)/\|\mbf z(\sigma)\|$ as $|\sigma|$ increases. The first observation is that $\kappa_2(\bmat{\mbf U & \hat{\mbf z}})$ is significantly reduced compared to \cref{fig:ConditionVsRadius}. Moreover, remarkably, the condition number {with optimal scaling} actually \emph{decreases} as $|\sigma|$ increases, as opposed to the condition number without scaling in \cref{fig:ConditionVsRadius}, which increased rapidly as $|\sigma|$ grew. The reason for this behavior is explained in \cref{sec:AvoidOverflow}, and taken advantage of in our algorithmic development later on.
\begin{figure}[!htb]
    \centering
    \includegraphics[width = .8\textwidth]{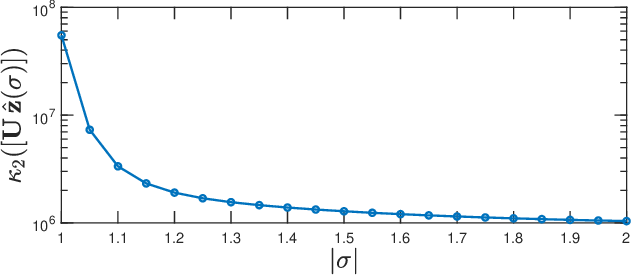}
    \caption{Condition number of $\bmat{\mbf U & \hat{\mbf z}(\sigma)}$ as $|\sigma|$ increases.}
    \label{fig:ModulusVsCond_normalized}
\end{figure}

This analysis leads us to replace $\mbf z(\sigma)$ with $\hat{\mbf z}(\sigma) \coloneqq \mbf z(\sigma)/\|\mbf z(\sigma)\|$ in \cref{eq:CalcM0Orth} and 
\begin{equation}
    \begin{bmatrix}
        \label{eq:calcM0Normalize}
        \mbf U & \hat{\mbf z}(\sigma)
        \end{bmatrix}
        \bmat{{\xi} \\ \hat M_0} = \mbf b(\sigma); \quad M_{0} = \frac{1}{\|\mbf z\|}\hat M_{0},
\end{equation}
which allows to recover $M_0$ by solving a much better conditioned problem. We perform an analogous process for $M_1$.
\subsection{Predicting conditioning from system properties}
\label{sec:CondNumFromSysProps}
\Cref{sec:explicitCondNumForm} provided an explicit formula for the condition number of the matrix in \cref{eq:CalcM0Orth}, which motivated preconditioning by normalizing $\mbf z(\sigma)$, shown in \cref{eq:calcM0Normalize}.  Now, with $\|\hat{\mbf z}(\sigma)\|$ fixed, the condition number of \cref{eq:calcM0Normalize} is dependent only on the angle between $\Range(\mbf U)$ and $\hat{\mbf z}(\sigma)$ (the normalized $\mbf z(\sigma)$).  This section shows how we may analyze \cref{eq:CalcM1Orth} to reveal how properties of the underlying system $\mc S$ as in \cref{eq:LinDiscSys} effect the conditioning of \cref{eq:calcM0Normalize}.

Let $\mathcal S$ be a system as in \cref{eq:LinDiscSys}, $\bb U \in \bb R^{T+1}$ be an input to $\mathcal S$, and $\bb Y \in \bb R^{T+1}$ be the corresponding output.  Assume that for $\bb U, \bb Y$, and $\sigma \in \bb C$ the conditions \cref{eq:ExistCondOrth}, \cref{eq:UniqueCondOrth}, and \cref{eq:ExistCondDerOrth} are all satisfied.  Then, {recalling \cref{eq:CalcM1Orth}}, there exists $\xi_1 \in \bb C^{T-n}$ such that

$$\begin{bmatrix}
\mbf U & \mbf z(\sigma)
\end{bmatrix}
\begin{bmatrix}
\xi_1 \\ H'(\sigma)
\end{bmatrix} = 
\begin{bmatrix}
\gamma_n^{(1)}(\sigma)\\ 
H(\sigma)\gamma^{(1)}_n(\sigma)
\end{bmatrix},
$$
where $\gamma^{(1)}_n(\sigma)$ is as defined in~\cref{eq:gamma1_def}.  Decompose $\mbf z(\sigma)$ as
\begin{equation}
    \mbf z(\sigma) = 
    \mbf u + \mbf v, \quad \mbf u \in \Range(\mbf U), \quad \mbf U^{\top} \mbf v = \mbf 0.
\end{equation}
Then there exists a (unique) $\hat \xi_1 \in \bb C^p$ such that
\begin{equation}
    \begin{bmatrix}
    \label{eq:SubWforZ}
    \mbf U & \mbf v
    \end{bmatrix}
    \begin{bmatrix}
    \hat \xi_1 \\ H'(\sigma)
    \end{bmatrix} = 
    \begin{bmatrix}
    \gamma_n^{(1)}(\sigma)\\ 
    H(\sigma)\gamma^{(1)}_n(\sigma)
    \end{bmatrix}.
\end{equation}
Note that the columns of $\mbf U$ are orthogonal to $\mbf v$ and thus
\begin{equation}
    \label{eq:NormEqUnsimp}
    \|\mbf U\hat \xi_1\|^2 + \|H'(\sigma) \mbf v \|^2 = 
    \left\|\begin{bmatrix} 
    \gamma_n^{(1)}(\sigma)\\ 
    H(\sigma)\gamma^{(1)}_n(\sigma)
    \end{bmatrix}\right\|^2,
\end{equation}
which may be simplified to
\begin{equation}
    \label{eq:NormEqUnsimp2}
    \|\hat \xi_1\|^2 + |H'(\sigma)|^2\|\mbf v \|^2 
    =(1+|H(\sigma)|^2)\|\gamma^{(1)}_n(\sigma)\|^2.
\end{equation}
Now let $\theta \in (0,\pi/2]$ be the angle that $\mbf z(\sigma)$ makes with the subspace spanned by $\mbf U$. Note that $0$ is not included in the interval as $\theta = 0$ would imply that $\mbf z(\sigma) \in \Range(\mbf U)$, which violates \cref{eq:UniqueCondOrth}.
Define $\alpha = \sin(\theta)$, so $0 < \alpha \leq 1$ and 
$\alpha = \frac{\|\mbf v\|}{\|\mbf z(\sigma)\|} = \frac{\|\mbf v\|}{\|\gamma_n(\sigma)\|}.$
Then we can write \cref{eq:NormEqUnsimp2} as
\begin{equation}
    \label{eq:NormEqWithAlpha}
    \|\hat \xi_1\|^2 + |H'(\sigma)|^2\alpha^2\|\gamma_n(\sigma)\|^2 = (1+|H(\sigma)|^2)\|\gamma_n^{(1)}(\sigma)\|^2.
\end{equation}
Since $\alpha > 0$, solving \cref{eq:NormEqWithAlpha} for $\alpha$ gives
\begin{equation}
    \label{eq:alphaEquation}
    \alpha = \sqrt{\frac{(1+|H(\sigma)|^2)\|\gamma_n^{(1)}(\sigma)\|^2 - \|\hat \xi_1\|^2}{|H'(\sigma)|^2\|\gamma_n(\sigma)\|^2}}.
\end{equation}
\begin{remark}
    Since $\alpha$ is defined as $\frac{\|\mbf v\|}{\mbf z(\sigma)}$, and any scaling done to $\mbf z(\sigma)$ will also be done to $\mbf v$, we see that the value of $\alpha$ is unchanged by the normalization step in \cref{eq:calcM0Normalize}.  Thus the following analysis applies to both \cref{eq:CalcM0Orth} and \cref{eq:calcM0Normalize}.
\end{remark}
Since $\mbf U$ has orthonormal columns and $\|\hat{\mbf z}(\sigma)\| = 1$, $\alpha$ will completely determine
$\mathcal{K} = \kappa_2\left(\begin{bmatrix}
\mbf U & \hat{\mbf z}(\sigma)
\end{bmatrix}\right).$  In particular, as $\alpha = \sin(\theta)$ decreases (to zero), $\hat{\mbf z}(\sigma)$ comes close to  $\Range(\mbf U)$ and $\mathcal{K}$ increases.  And vice verso, as $\alpha$ increases to $1$, $\hat{\mbf z}(\sigma)$ becomes nearly orthogonal to  $\Range(\mbf U)$ and  $\mathcal{K}$ decreases.

In the coming sections, we aim to connect the properties of the underlying system $\mc S$ and parameters of the data-informativity framework to the value of $\alpha$ \cref{eq:alphaEquation}, and thus the conditioning of \cref{eq:calcM0Normalize}.  To this end, we identify two important groups of quantities present in \cref{eq:alphaEquation}.
The first group are quantities which are intrinsic to the system $\mc S$.  These quantities are $H(\sigma)$ and $H'(\sigma)$, which we call ``system-dependent quantities". Importantly, these quantities do not depend on parameters of the data-informativity framework.  The other group are the ``system-independent quantities", $\|\gamma_n(\sigma)\|$ and $\|\gamma_n^{(1)}(\sigma)\|$, which 
{are artifacts of the data-informativity framework.}
  We first investigate how the system-dependent quantities effect conditioning of \cref{eq:calcM0Normalize} in \cref{sec:sysDependentAnalysis}, then examine how the system-independent quantities effect conditioning of \cref{eq:calcM0Normalize} in \cref{sec:systemIndependentAnalysis}.  The influence of $\|\hat \xi_1\|$ is discussed in \cref{rem:XiNotControlled}.

\subsubsection{System dependent analysis}
\label{sec:sysDependentAnalysis}

Consider the recovery of $H(\sigma)$ for values of $\sigma$ that lie equidistant from the origin (e.g., each $\sigma$ lies on the unit circle).  In this scenerio, the quantities $\|\gamma_n(\sigma)\|$ and $\|\gamma^{(1)}_n(\sigma)\|$ do not change, but $H(\sigma)$ and $H'(\sigma)$ do change.
Assuming that $\|\hat \xi_1\|$ remains constant (see remark \ref{rem:XiNotControlled}), we observe that as the ratio $|H'(\sigma)|/|H(\sigma)|$ increases (decreases), $\alpha$ decreases (increases), which causes the condition number of $\bmat{\mbf U & \hat{\mbf z}(\sigma)}$ to increase (decrease).  This implies that, in general, if we attempt to recover $H(\sigma)$ where $|H'(\sigma)|/|H(\sigma)|$ is small, we should expect the matrix $\bmat{\mbf U & \hat{\mbf z}(\sigma)}$ to be better conditioned than if we attempt to recover $H(\sigma)$ where $|H'(\sigma)|/|H(\sigma)|$ is large. ({This discussion agrees with the classical definition of a condition number of a problem of evaluating $f(\sigma) = H(\sigma)$ as we assume $\mid \sigma \mid = 1$.})

To examine the relationship between the relative derivative $|H'(\sigma)|/|H(\sigma)|$ and conditioning of $\bmat{\mbf U & \hat{\mbf z}(\sigma)}$, we consider the ISS1R model, $\mc S_1$ with transfer function $H_1(z)$ described in \cref{sec:motivation}.    We simulate $\mc S_1$ for $T = 3n$ time steps with a Gaussian random input $\bb U \in \bb R^{T+1}$ to obtain the output $\bb Y \in \bb R^{T+1}$, and form $\mbf G_n$ as in \cref{eq:GnDef}, $\mbf U = \orth(\mbf G_n)$, and $\mbf z(\sigma)$ as in \cref{eq:zSigmaDef}.  We record $H_1'(\sigma_i)/H_1(\sigma_i)$ (the true values) for $ i=1,2,\ldots,1,\!000$ and $\sigma_i = e^{\omega_i \mbf i}$, where the $\omega_i$ are logarithmically spaced in $[10^{-3},\pi)$, as well as the condition number of $\bmat{\mbf U & \hat{\mbf z}(\sigma_i)}$ for each $i = 1,\ldots,1,\!000$.  These results are displayed in \cref{fig:CondNumVsRelDer}, which shows that, in general, as the relative derivative $H'_1(\sigma)/H_1(\sigma)$ decreases, so does the condition number of $\bmat{\mbf U & \hat{\mbf z}(\sigma_i)}$, thus supporting the analysis above.

\subsubsection{System independent analysis}
\label{sec:systemIndependentAnalysis}
Recall from \cite{AckermannG2023FreqROMTD} or \cref{rem:dontNeedTrueN} that any integer $\hat n > n$ used to calculate frequency information via \cref{eq:calcM0Normalize} will, theoretically, yield the same result as using the true order of the underlying system, $n$.  If such a $\hat n > n$ is used, then $H(\sigma)$ and $H'(\sigma)$ do not change, but $\|\gamma_{\hat n}(\sigma)\|$ and $\|\gamma_{\hat n}^{(1)}(\sigma)\|$ will change.  Specifically, for fixed $\sigma$, as $n > |\sigma|$ increases, the ratio $\|\gamma_n^{(1)}(\sigma)\|/\|\gamma_n(\sigma)\|$ increases as well.  So if all other terms are held constant, and $n$ is increased to $\hat n > n$, then $\alpha$ increases as well. This implies that while the \emph{theoretical} solution of \cref{eq:calcM0Normalize} will be unchanged for any $\hat n \geq n$, the \emph{numerically computed} solution of \cref{eq:calcM0Normalize} may be more accurate when  an $\hat n$ strictly greater than $n$ is used, due to better conditioning of $\bmat{\mbf U & \hat{\mbf z}(\sigma)}$.

To examine the relationship between the value of $n$ used in \cref{eq:calcM0Normalize} and the condition number of $\bmat{\mbf U & \hat{\mbf z}(\sigma)}$, we consider the same experimental set up as in \cref{sec:sysDependentAnalysis}.  We then record the condition number $\kappa_2\left(\bmat{\mbf U & \hat{\mbf z}(\sigma)}\right)$ for fixed $\sigma = e^{10^{-3}\mbf i}$ and for each value of $n$ between $100$ and $900$.  We also solve \cref{eq:calcM0Normalize} for $M_0$ to show how conditioning of $\bmat{\mbf U & \hat{\mbf z}(\sigma)}$ effects the relative error $\epsilon_{rel} \coloneqq (H(\sigma) - M_0)/H(\sigma)$.  The results of this experiment are shown in \cref{fig:CondNumvsN}.

From \cref{fig:CondNumvsN}, we see that, in general, as the value of $n$ used to construct $\mbf U$ and $\mbf z(\sigma)$ increases, the condition number of $\bmat{\mbf U & \hat{\mbf z}(\sigma)}$ decreases.  Also shown in \cref{fig:CondNumvsN} is the relative error of frequency information $M_0 \approx H(\sigma)$ recovered for the various values of $\hat n$.  The relative error shows that, for this example, using $\hat n > n$ is in fact necessary to  obtain quality frequency information.  If the true system order $n$ is used in \cref{eq:calcM0Normalize}, the corresponding relative error $\epsilon_{rel}$ is $0.90$, i.e., no meaningful information about the underlying system was recovered.  However, as $\hat n$ increases, $\epsilon_{rel}$ decreases with the condition number, down to $\epsilon_{rel} = 5.7 \times 10^{-11}$ when $\hat n = 900$.

\begin{remark}
    \label{rem:XiNotControlled}
    In the above analysis, we have omitted that changing $n$, $\sigma$, or the underlying system will all cause $\|\hat \xi_1\|$ to change, and thus the relationships that we asserted to exist between these quantities and $\alpha$ could be muddied by changes in $\|\hat \xi_1\|$.  Indeed, we do see that the relationships described above may be distorted by $\|\hat \xi_1\|$ at small scales, but over larger changes in the parameters, the relationships do tend to hold.  For example, increasing the value of $n$ used in \cref{eq:calcM0Normalize} to $\hat n = n + 1$ may not decrease the condition number, but in our numerical experiments, increasing $n$ to $\hat n = 2n$ almost always leads to to a significant decrease in the condition number.
\end{remark}

\begin{figure}[!htb]
    \centering
     \subfloat[Condition number and relative derivative]{\label{fig:CondNumVsRelDer}\includegraphics[scale = .64]{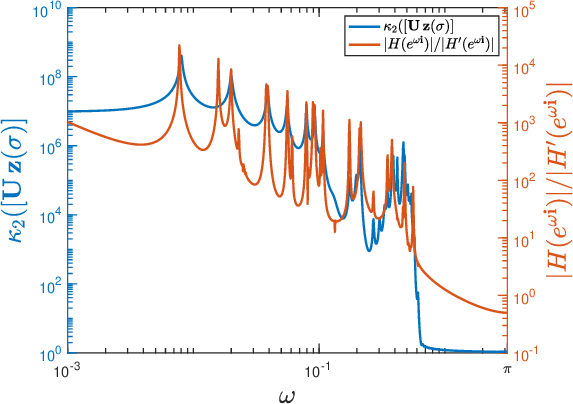}}
     \hspace{.5cm}
     \subfloat[Condition number and relative error]{\label{fig:CondNumvsN}\includegraphics[scale = .64]{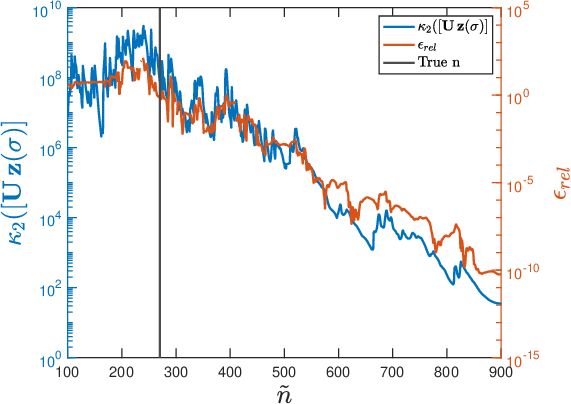}}
     \caption{Effect of relative derivative and increasing value of $n$ on condition number of \cref{eq:calcM0Normalize}} 
     \label{fig:CondNumVsRelDerAndN}
\end{figure}

\section{TD-IRKA algorithm and further considerations}
\label{sec:TD-IRKA}
TF-IRKA (\cref{alg:IRKA}) constructs locally $\mathcal H_2$ optimal reduced order models from transfer function values $H(\sigma)$ and derivatives $H'(\sigma)$.  Specifically, we require sampling the transfer function and derivative where $|\sigma| > 1$.  The analysis of the framework of \cite{burohmanBSC2020informativity} in \cite{AckermannG2023FreqROMTD} and \cref{sec:Conditioning} allow us to recover such information in inexact arithmetic for large scale systems.
Our updated method allows us to extend TF-IRKA to make use of time-domain data.  We call this extension Time-Domain-IRKA (TD-IRKA), as it constructs locally optimal $\mathcal H_2$ ROMs from a single set of time-domain input-output data ($\bb U,\bb Y$).

In this section we first discuss avoiding overflow in formation of $\mbf z(\sigma)$, an essential step for an effective implementation of TD-IRKA, then present the TD-IRKA algorithm (\cref{alg:TDIRKA}).

\subsection{Avoiding overflow}
\label{sec:AvoidOverflow}
To compute frequency data $H(\sigma)$ and $H'(\sigma)$, we must form the vector $\mbf z(\sigma)$, which has $\sigma^n$ as its last entry \cref{eq:zSigmaDef}.  We know from \cref{thm:H2optCond} that interpolation at $\sigma \in \bb C$ where $|\sigma| > 1$ is nessesary to for $\mc H_2$ optimal DDROMs.  From \cref{sec:CondNumFromSysProps}, we know that in order to accurately recover frequency information using large values of $n$ in \cref{eq:calcM0Normalize} may be necessary.  If $n$ is large, TD-IRKA may require sampling at a $\sigma$ such that $\sigma^n$ will cause overflow in double precision.

First, recall from \cref{sec:CondNumFromSysProps} that both $n$ and the relative derivative at $\sigma$, $H'(\sigma)/H(\sigma)$, play a role in the conditioning of \cref{eq:calcM0Normalize}.  Recall also that $H(z)$ is a proper rational function of $z$, and thus so is $H'(z)$.  If $H(z)$ has simple poles $\lambda_i$ (all contained in the unit disc) with residues $\phi_i$, then $H(z)$ and $H'(z)$ have the pole-residue expansions
$$H(z) = \sum_{i=1}^n\frac{\phi_i}{z-\lambda_i} \quad\quad H'(z) = \sum_{i=1}^n-\frac{\phi_i}{(z-\lambda_i)^2}.$$
So while both $|H(z)|$ and $|H'(z)|$ decay to $0$ as $|z|$ increases, $|H'(z)|$ decays faster.  Thus (assuming we are far from zeros of $H(z)$), we expect the ratio $|H'(z)|/|H(z)|$ to decay quickly as $|z|$ increases past $1$.

This observation implies that if a large $\hat n$ is required in place of $n$ in \cref{eq:calcM0Normalize} to recover $H(\sigma_1)$ for $|\sigma_1| = 1$, it is likely that some {$ \tilde n \ll \hat n$} may be sufficient to recover $H(\sigma_2)$ for $|\sigma_2| > 1$.

To test this hypothesis, we aim to recover two sets of frequency data from the same time domain input-output data, generated by the system $\mc S_1$ described in \cref{sec:motivation}.  The first set at which we will recover frequency data is $\Sigma^1 = \{\sigma_i\}_{i=1}^m$, where $m = 10^3$, $\sigma_i = e^{\mbf i \omega_i}$ and the $\omega_i$ are logarithmically distributed in $[-3,\pi)$. The second set of points to recover frequency information is $\Sigma^d = \{d\sigma_i\}_{i=1}^m$, where $d = 2.5$ and $\sigma_i$ are defined above.  The main difference between $\Sigma^1$ and $\Sigma^d$ is that each element of $\Sigma^1$ has modulus 1 and each element of $\Sigma^d$ has modulus $d = 2.5$.

\begin{figure}[!htb]
    \centering
    \includegraphics[width = .8\textwidth]{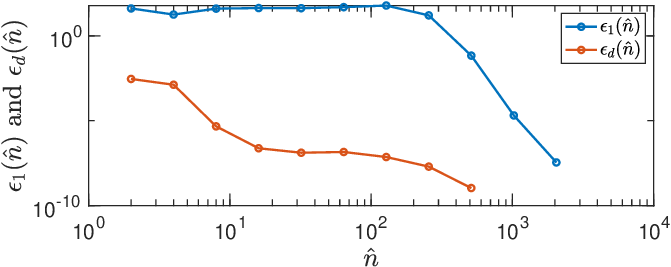}
    \caption{Errors $\epsilon_1(\hat n)$ and $\epsilon_d(\hat n)$ dependence on $\hat n$}
    \label{fig:ErrVsN_differentMod}
\end{figure}

For both sets $\Sigma^1$ and $\Sigma^d$, we recover frequency data from the same time-domain data; a Gaussian random input $\bb U \in \bb R^{T+1}$ and the corresponding output $\bb Y \in \bb R^{T+1}$, where $T = 10^4$.  We attempt to recover frequency information $H(\sigma_i)$ and $H(d\sigma_i)$ via \cref{eq:calcM0Normalize} using $\hat n = 2^1,2^2,\ldots, 2^{11}$.  To evaluate the accuracy of recovered frequency information, for each $\hat n$, we record
$$\epsilon_1(\hat n) = \max_i\frac{|H(\sigma_i)-M_0(\sigma_i)|}{|H(\sigma_i)|} \quad \text{and} \quad \epsilon_d(\hat n) = \max_i\frac{|H(d\sigma_i)-M_0(d\sigma_i)|}{|H(d\sigma_i)|}.$$
The dependence of $\epsilon_1(\hat n)$ and $\epsilon_d(\hat n)$ on $\hat n$ is displayed in \cref{fig:ErrVsN_differentMod}.

\cref{fig:ErrVsN_differentMod} clearly shows that $H(d\sigma_i)$ can be accurately recovered via \cref{eq:calcM0Normalize} with much smaller $\hat n$ than required for $H(\sigma_i)$.  These results reflect the analysis performed earlier in this section, and leads to a strategy to avoid overflow in the TD-IRKA iteration.  If TD-IRKA requires recovery of $H(\sigma)$ where $|\sigma^{\hat n}|$ causes overflow, we instead recover $H(\sigma)$ via \cref{eq:calcM0Normalize} with $\tilde n = \hat n/2$.  This strategy will  be employed in our algorithm TD-IRKA, which we discuss next.

\subsection{Algorithm}
Now we have all the pieces for the proposed method TD-IRKA. The resulting algorithm is presented in~\cref{alg:TDIRKA}.
Unlike the original TF-IRKA in~\cref{alg:IRKA} which requires re-sampling $H(z)$ and $H'(z)$ at new $r$ frequency samples at every iteration, TD-IRKA only recovers this required frequency domain information from the a single time domain input-output data $(\bb U,\bb Y)$, thus producing locally optimal $\mathcal{H}_2$ optimal DDROMs from time-domain data. In~\cref{alg:TDIRKA}, the relevant parts of the analysis is referenced in every step. 
\begin{algorithm}[!htb]
\caption{TD-IRKA}\label{alg:TDIRKA}
\begin{algorithmic}
\Require $\{\sigma_i\}_{i = 1}^r$, an initial set of interpolation points closed under conjugation, time domain data $(\bb U,\bb Y)$, and working system order $\hat n$.
\State Recover $M_0 \approx H(\sigma_i)$ and $M_1 \approx H'(\sigma_i)$ at $\{\sigma_i\}_{i = 1}^r$ via the process described in \cref{sec:Conditioning} and \cite{AckermannG2023FreqROMTD}.
\State Form $\mbf E_{r}$ and $\mbf A_{r}$ via Hermite Loewner interpolation (see \cref{sec:HerLowInterp}, \cite{MayoA2007LoewnerFramework,AntoulasBG2020Book})
\While{Not Converged}
\State Calculate generalized eigenvalues $\{\lambda_i\}_{i=1}^r$ of the pencil $\lambda\mbf E_{r} - \mbf A_{r}$
\State Set $\sigma_i \gets \frac{1}{\lambda_i}$
\State Recover $M_0 \approx H(\sigma_i)$ and $M_1 \approx H'(\sigma_i)$ at $\{\sigma_i\}_{i = 1}^r$ via the process described in \cref{sec:Conditioning} and, if required, the overflow procedure as discussed in \cref{sec:AvoidOverflow}
\State Form new $\mbf E_{r}$ and $\mbf A_{r}$ via Hermite Loewner interpolation.
\EndWhile
\State Form final $\mbf E_{r},\mbf A_{r},\mbf b_{r},\mbf c_{r}$ via Hermite Loewner interpolation \cref{sec:HerLowInterp}.
\end{algorithmic}
\end{algorithm}

\section{Numerical Examples}
\label{sec:results}
We now display TD-IRKA's ability to construct $\mathcal H_2$ optimal DDROMs from time-domain data $(\bb U,\bb Y)$.  We will compare the quality of DDROMs produced by TD-IRKA (using only time-domain data $(\bb U,\bb Y)$ to those produced by TF-IRKA with the same initial interpolation set $\{\sigma_i\}_{i=1}^r$ and functions to compute the true values of $H(\sigma_i)$ and $H'(\sigma_i)$.  These numerical experiments were carried out on a 2023 MacBook Pro equipped with 16 GB RAM and an Apple M2 Pro chip running macOS Sonoma 14.4.1.  All algorithms are implemented in \MATLAB version 24.1.0.2578822 (R2024a) Update 2.  All code and data are available in \cite{supAck24}.

In all examples, we first construct an initial order $r$ reduced model $\widehat H_{\texttt{VF}}$ via vector fitting \cite{DeschrijverMDZ2008FastVF,Gustavsen2006PoleRelocVF,semlyenG1999VF}. Given $m$ data samples $H(\sigma_i),i=1,\ldots,m$ where $m > r$, vector fitting seeks to minimize the least squares error $\sum_{i = 1}^m\left|H(\sigma_i) - \widehat H_{\texttt{VF}}(\sigma_i)\right|^2$.  We then take the initial interpolation set to be the reciprocal of the poles of $H_{\texttt{VF}}$.  This idea was proposed in \cite{DramacGB2015QuadVF} as a preprocessing step to speed up convergence of TF-IRKA.  In all examples, the initial Vector Fitting model is constructed with $m = 3r$ values of the transfer function $H(e^{\omega_i\mbf i})$, where the $\omega_i$ are logarithmically spaced in $(0,\pi)$ and $r$ is the reduced order.  For TD-IRKA, the values $H(e^{\omega_i\mbf i})$ are obtained via the process described in \cref{sec:preliminaries} and \cite{AckermannG2023FreqROMTD}, and for TF-IRKA the values are obtained from the true transfer function.

\begin{remark}
    \cref{thm:H2optCond} gives simple criteria to check that the models recovered by TD- and TF-IRKA are indeed locally $\mathcal H_2$ optimal.  We cannot expect to satisfy these criteria exactly as our recovered frequency data will not be exact.  Further, the influence of stopping criterion for IRKA iterations is still an open question, see \cite{BeattieDG2019IRKApolePlace}, and the use of data-driven Hermite Loewner interpolatory models adds additional complications as these models can suffer from numerical instabilities, see \cite{embreeI2019LowPseudoSpec} for details.  Due to these considerations, we do not expect our models to exactly satisfy \cref{thm:H2optCond}.  Nevertheless, the results below clearly show that TD-IRKA is capable of producing near-optimal models from time-domain data that have similar approximation power to those created with TF-IRKA.
\end{remark}

\subsection{Linear advection model}
\label{sec:advectionModel}
We first consider a model of linear advection obtained from \cite{morwiki_linad}.  The model is a finite element discretization of the one-dimensional linear advection equation
\begin{equation}
    \label{eq:advectionEq}
    \frac{\partial v}{\partial t} = -a\frac{\partial v}{\partial x},
\end{equation}
where the left boundary $v(0,t)$ is the input $U(t)$ and the right boundary $v(1,t)$ is the output $Y(t)$.  We take the transport velocity $a$ to be $20$. We use the function provided in \cite{morwiki_linad} to create an order 
$n = 10^3$ finite element discretization of \cref{eq:advectionEq}, a linear continuous-time dynamical system.  We then discretize this model in time using sampling frequency $10^4$ Hz and a zero-order hold discretization method (\MATLAB's \texttt{c2d} command), which gives us the model $\mc S_2$ of the form \cref{eq:LinDiscSys} with associated transfer function $H_2(z)$.

To test the performance of \cref{alg:TDIRKA}, we first simulate $\mc S_2$ with a Gaussian random input $\bb U \in \bb R^{10^4 + 1}$ to obtain the corresponding output $\bb Y \in \bb R^{10^4 + 1}$.  For this data, we find that using $\hat n = 600$ in \cref{alg:TDIRKA} results in sufficiently accurate recovered frequency information, as determined by the error indicator from \cite{AckermannG2023FreqROMTD}.

Using $\bb U$ and $\bb Y$, we form DDROMs of $\mc S_2$ using TD-IRKA (\cref{alg:TDIRKA}), $\widehat H_{\texttt{TD}}$, for orders $r = 4,6,\ldots, 30$.
For comparison, we also form DDROMs $\widehat H_{\texttt{TF}}$ of $\mc S_2$ using TF-IRKA (\cref{alg:IRKA}), which was provided with $H_2(z)$ and its derivative $H'_2(z)$. To compare performance, we record the relative $\mc H_2$ error
\begin{equation}
    \label{eq:relH2err}
    \|H_2-\widehat H_{\texttt X}\|_{\mc H_2}/\|H_2\|_{\mc H_2},
\end{equation}
where $\texttt X$ is either $\texttt{TF}$ or $\texttt{TD}$.
These $\mc H_2$ errors are displayed in \cref{fig:Advection_H2Converge}.

\begin{figure}[!htb]
    \centering
    \includegraphics[width=.8\textwidth]{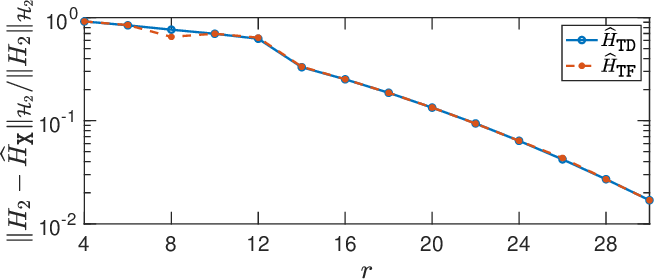}
    \caption{Relative $\mc H_2$ errors of TF-IRKA and TD-IRKA DDROMs approximating $H_2$.}
    \label{fig:Advection_H2Converge}
\end{figure}

We clearly see from \cref{fig:Advection_H2Converge} that \cref{alg:TDIRKA} is able to produce DDROMs that approximate $\mc S_2$ with nearly the same accuracy as \cref{alg:IRKA}.  The number of iterations required for convergence was comparable between the algorithms. For example, for $r=30$ TF-IRKA converged in 
35 steps and thus required
resampling the true functions  $H(z)$  and $H'(z)$ at more than $1,050$ frequencies. This is what true $\mathcal{H}_2$ optimality requires. But now with TD-IRKA, with a single time-domain input-output trajectory, we are able to capture TF-IRKA accuracy without ever sampling either $H(z)$ or $H'(z)$.

\subsection{ISS model}
We next consider the ISS1R model ($\mc S_1$ with transfer function $H_1(z)$), described in \cref{sec:motivation}).  We simulate $\mc S_1$ for $T = 10^4$ time steps with Gaussian random input $\bb U \in \bb R^{T+1}$ to obtain the output $\bb Y \in \bb R^{T+1}$.  Then, for each $r = 2,4,\ldots,30$, we construct order $r$ locally $\mc H_2$ optimal reduced order models with TF-IRKA ($\widehat H_{\texttt{TF}},   $ \cref{alg:IRKA}) and TD-IRKA ($\widehat H_{\texttt{TD}}$, \cref{alg:TDIRKA}).  The TD-IRKA models are all constructed from the same $\bb U$, $\bb Y$ pair; only a single time domain simulation was performed.  To recover the frequency information to create the TD-IRKA models, we use $\hat n = 900$ (see \cref{sec:CondNumFromSysProps}).  We record the relative $\mc H_2$ errors (\cref{eq:relH2err}) in \cref{fig:ISS_H2_converge}.

\begin{figure}[!htb]
    \centering
    \includegraphics[width=.8\textwidth]{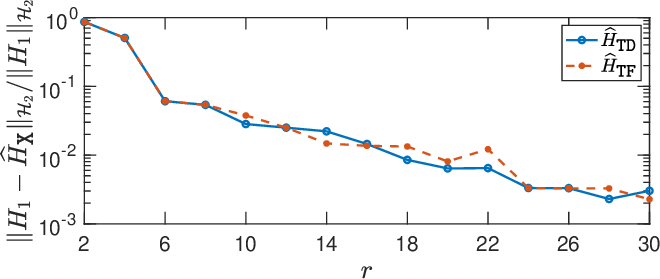}
    \caption{Relative $\mc H_2$ errors of TF-IRKA and TD-IRKA DDROMs approximating $H_1$.}
    \label{fig:ISS_H2_converge}
\end{figure}

\cref{fig:ISS_H2_converge} once again confirms that TD-IRKA tracks the performance of TF-IRKA as the order of the reduced model increases.  In contrast to the advection example (\cref{sec:advectionModel}), we observe that for several values of $r$ there is a visual difference between the $\mc H_2$ performance of TF-IRKA and TD-IRKA.  In each case, both methods are initialized with the same initial interpolation set and both methods converged to a locally $\mc H_2$ optimal DDROM.  However, due to the lower accuracy of the frequency information recovered from time-domain data $(\bb U, \bb Y)$, TD-IRKA converged to a different local minimum, {indeed with a smaller $\mathcal{H}_2$ error in many cases}.  The reason of this behavior is likely that small errors in recovered frequency information can lead to changes in the recovered eigenvalues of the Loewner matrix pencil $z\mbf L - \mbf M$ (\cref{sec:HerLowInterp}) \cite{embreeI2019LowPseudoSpec}.  The consideration and mitigation of such effects are reserved for future research.

We also remark that for this example, the technique suggested to avoid overflow in the calculation of $\sigma^{\hat n}$ discussed in \cref{sec:AvoidOverflow} was (automatically) used several times.  Clearly, this technique did not prevent TD-IRKA from converging.

\subsection{Heat equation}

We now consider a model of 1-dimensional heat flow in a rod.  The left boundary of the rod is kept at constant temperature $T(0,t) = 0$, and at the right boundary, the rate of heat flow into the rod is given by the input $U(t)$.  The temperature $T(x,t)$ is modeled via the PDE 
\begin{equation}
    \label{eq:HeatPDE}
    C_p\rho \frac{\partial T(x,t)}{\partial t} = K_0\frac{\partial^2 T(x,t)}{\partial x^2}; \quad t > 0; \quad x \in (0,1),
\end{equation}
together with the boundary conditions
\begin{center}
  \begin{subequations}
    \parbox[c]{\textwidth*14/30}{
    \begin{equation}
        \label{eq:LeftBC}
        T(0,t) = 0
    \end{equation}
    }~\hspace{0.19in}
    \parbox[c]{\textwidth*14/30}{
    \begin{equation}
        \label{eq:RightBC}
        \frac{\partial T(1,t)}{\partial t} = U(t).
    \end{equation}
    }~\hspace{0.19in}
  \end{subequations}
\end{center}
These boundary conditions cause \cref{eq:HeatPDE} to be asymptotically stable \cite{CurtainM2009TransferFuns}. We choose the specific heat capacity as $C_p = 0.896$, mass density as $\rho = 2700$, and thermal conductivity as $K_0 = 167$, which correspond to the thermal properties of 6061 Aluminum at 300 Kelvin \cite{LeinhardL2008HeatTransfer}.  The output $Y$ is taken to be the temperature at $x = 0.8$.  The input $U(t)$ is the zero-order hold of a discrete random vector $\bb U \in \bb R^{4001}$.

We simulate the response $Y(t)$ of the PDE \cref{eq:HeatPDE} to the input $U(t)$ for 4 seconds with sampling frequency $\omega_s = 10^{-3}$ (so $\bb U \in \bb R^{2/\omega_s + 1}$).  The simulation was performed with \MATLAB's \texttt{pdepe} PDE solver.   The vector of discrete outputs is denoted $\bb Y \in \bb R^{4/\omega_s + 1}$.  We use this $\bb U$ and $\bb Y$ to construct a TD-IRKA model of order $r = 4$.

We emphasize that in contrast to the previous two examples, where we had an explicit discrete-time dynamical system realization as in \cref{eq:LinDiscSys}, in this example we directly simulate the underlying PDE \cref{eq:HeatPDE}. We do not assume access to the transfer function of an underlying fully-discretized model. We only assume access to the output of a black-box simulation as is the case in many practical situations. \emph{Thus, one cannot produce DDROMs with 
TF-IRKA in this case since we do not have access to $H(z)$}. 

To assess the quality of our reduced model, we simulate both the DDROM and the PDE for a test input $\bb U_{\text{test}}$ chosen as the zero order hold of a sawtooth wave with frequency 10 Hz and amplitude of 10 for 4 seconds, thus $\bb U_{\text{test}} \in \bb R^{4/\omega_s + 1}$.  The response of the DDROM to $\bb U_{\text{test}}$ is $\bb Y_{\text{TD}} \in \bb R^{4/\omega_s + 1}$ and the response of the PDE to $\bb U_{\text{test}}$ is $\bb Y_{\text{test}} \in \bb R^{4/\omega_s + 1}$.  Both outputs are displayed in \cref{fig:HeatROMTraj} and the corresponding error $|\bb Y_{\text{test}}-\bb Y_{TD}|$ is displayed in \cref{fig:HeatROMError}.

\begin{figure}[!htb]
    \centering
     \subfloat[Trajectories]{\label{fig:HeatROMTraj}\includegraphics[scale = .64]{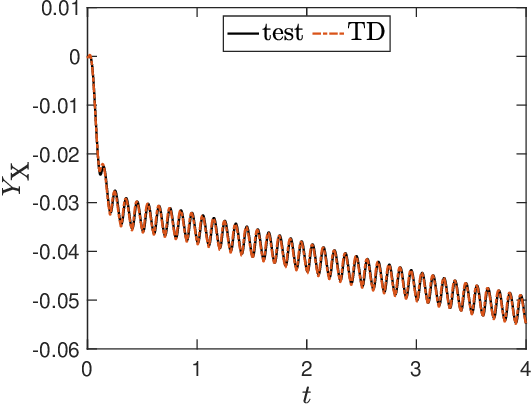}}~~~~
     \subfloat[Error]{\label{fig:HeatROMError}\includegraphics[scale = .64]{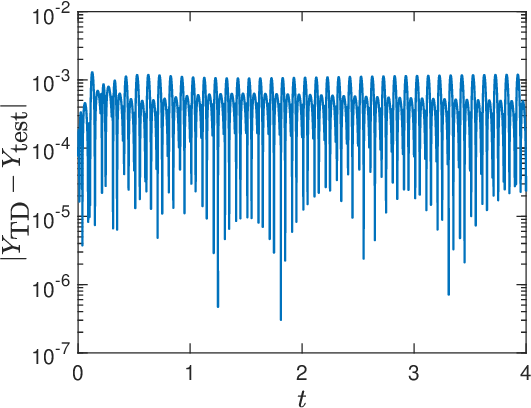}}
     \caption{Trajectories $\bb Y_{\text{test}}$ and $\bb Y_{\text{TD}}$ of \cref{eq:HeatPDE} and an order $r = 4$ DDROM obtained via \cref{alg:TDIRKA} when driven by $\bb U_{\text{test}}$, and associated error $|\bb Y_{\text{test}}-\bb Y_{\text{TD}}|$.}
     \label{fig:TrajectoryAndErrorHeat}
\end{figure}

\cref{fig:TrajectoryAndErrorHeat} shows that 
the TD-IRKA DDROM provides a high-fidelity approximation to the solution obtained from the PDE solver; the maximum error was $1.29 \times 10^{-3}$.  These results indicate \cref{alg:TDIRKA} produces 
high-fidelity (in this case locally $\mathcal{H}_2$ optimal) DDROMs 
using only time-domain black-box simulations.  These results are especially encouraging since even though the data-informativity theory assumes an underlying rational function to approximate, this example seems to illustrate that the method can be applied to a broader class of functions.

\section{Conclusions}
We presented an analysis of the data informativity framework for interpolation in the special case of recovering frequency information at points outside the unit disc.  This analysis included an optimal scaling result for conditioning of nearly unitary matrices, as well as connecting conditioning of the linear systems arising from the data informativity framework to properties of the underlying dynamical system.  These considerations were key in producing a variant of IRKA, TD-IRKA, which can produce a DDROM of a dynamical system provided with only a single time-domain simulation of the dynamical system.  TD-IRKA was demonstrated to match the convergence behavior of TF-IRKA for two benchmark systems and was also shown to work on data obtained from a PDE solver, without first forming a linear discrete-time dynamical system. Extending the data-informativity framework for interpolation to special classes of nonlinear systems, such as bilinear systems, could be an interesting research direction to pursue to further this framework.

\bibliographystyle{siamplain}
\bibliography{references2}

\begin{thebibliography}{10}

\bibitem{supAck24}
{\sc M.~S. Ackermann}, {\em Code and {{Data}} for the {{Numerical Experiments}} in ``{{Time-Domain Iterative Rational Krylov Method}}"}, July 2024, \url{https://doi.org/10.5281/zenodo.12751391}.

\bibitem{AckermannG2023FreqROMTD}
{\sc M.~S. Ackermann and S.~Gugercin}, {\em Frequency-{{Based Reduced Models}} from {{Purely Time-Domain Data}} via {{Data Informativity}}}, Nov. 2023, \url{https://arxiv.org/abs/2311.05012}.

\bibitem{Antoulas2005ApproxDynamSys}
{\sc A.~Antoulas}, {\em Approximation of {{Large-Scale Dynamical Systems}}}, {SIAM}, 2005.

\bibitem{AntoulasBG2020Book}
{\sc A.~Antoulas, C.~Beattie, and G{\"u}gercin}, {\em Interpolary {{Methods}} for {{Model Reduction}}}, Computational {{Science}} and {{Engineering}}, {SIAM}, 2020.

\bibitem{BeattieDG2019IRKApolePlace}
{\sc C.~Beattie, Z.~Drma{\v c}, and S.~Gugercin}, {\em Revisiting {{IRKA}}: {{Connections}} with pole placement and backward stability}, Vietnam Journal of Mathematics, 48 (2020), pp.~963--985, \url{https://doi.org/10.1007/s10013-020-00424-0}.

\bibitem{BeattieG2012TFIRKA}
{\sc C.~Beattie and S.~Gugercin}, {\em Realization-independent {{H2-approximation}}}, in 2012 {{IEEE}} 51st {{IEEE Conference}} on {{Decision}} and {{Control}} ({{CDC}}), {Maui, HI, USA}, Dec. 2012, {IEEE}, pp.~4953--4958, \url{https://doi.org/10.1109/CDC.2012.6426344}.

\bibitem{BennerGW2015SurveyProjMethods}
{\sc P.~Benner, S.~Gugercin, and K.~Willcox}, {\em A {{Survey}} of {{Projection-Based Model Reduction Methods}} for {{Parametric Dynamical Systems}}}, SIAM Review, 57 (2015), pp.~483--531, \url{https://doi.org/10.1137/130932715}.

\bibitem{BerlijafaG2017RKFIT}
{\sc M.~Berljafa and S.~G{\"u}ttel}, {\em The {{RKFIT Algorithm}} for {{Nonlinear Rational Approximation}}}, SIAM Journal on Scientific Computing, 39 (2017), pp.~A2049--A2071, \url{https://doi.org/10.1137/15M1025426}.

\bibitem{bruntonK2019DataDrivenSEbook}
{\sc S.~L. Brunton and J.~N. Kutz}, {\em Data-{{Driven Science}} and {{Engineering}}: {{Machine}} Learning, {{Dynamical Systems}}, and {{Control}}}, {Cambridge University Press}, 2019.

\bibitem{burohmanBSC2020informativity}
{\sc A.~M. Burohman, B.~Besselink, J.~M.~A. Scherpen, and M.~K. Camlibel}, {\em From data to reduced-order models via moment matching}, arXiv:2011.00150 [cs, eess, math],  (2020), \url{https://arxiv.org/abs/2011.00150}.

\bibitem{Chahlaoui2002Benchmarks}
{\sc Y.~Chahlaoui and P.~V. Dooren}, {\em A collection of {{Benchmark}} examples for model reduction of linear time invariant dynamical systems}, The University of Manchester,  (2002), p.~26.

\bibitem{conniAAARationalApproximation2024}
{\sc G.~Conni, F.~Naets, and K.~Meerbergen}, {\em {{AAA}} rational approximation for time domain model order reduction}, Applied Mathematics Letters, 157 (2024), p.~109188, \url{https://doi.org/10.1016/j.aml.2024.109188}.

\bibitem{CurtainM2009TransferFuns}
{\sc R.~Curtain and K.~Morris}, {\em Transfer functions of distributed parameter systems: {{A}} tutorial}, Automatica, 45 (2009), pp.~1101--1116, \url{https://doi.org/10.1016/j.automatica.2009.01.008}.

\bibitem{DeschrijverMDZ2008FastVF}
{\sc D.~Deschrijver, M.~Mrozowski, T.~Dhaene, and D.~De~Zutter}, {\em Macromodeling of {{Multiport Systems Using}} a {{Fast Implementation}} of the {{Vector Fitting Method}}}, IEEE Microwave and Wireless Components Letters, 18 (2008), pp.~383--385, \url{https://doi.org/10.1109/LMWC.2008.922585}.

\bibitem{DramacGB2015QuadVF}
{\sc Z.~Drma{\v c}, S.~Gugercin, and C.~Beattie}, {\em Quadrature-{{Based Vector Fitting}} for {{Discretized H2 Approximation}}}, SIAM Journal on Scientific Computing, 37 (2015), pp.~A625--A652, \url{https://doi.org/10.1137/140961511}.

\bibitem{DrohmannHO2012ReducedBasis}
{\sc M.~Drohmann, B.~Haasdonk, and M.~Ohlberger}, {\em Reduced {{Basis Approximation}} for {{Nonlinear Parametrized Evolution Equations}} based on {{Empirical Operator Interpolation}}}, SIAM Journal on Scientific Computing, 34 (2012), pp.~A937--A969, \url{https://doi.org/10.1137/10081157X}.

\bibitem{embreeI2019LowPseudoSpec}
{\sc M.~Embree and A.~C. Ionit{\u a}}, {\em Pseudospectra of loewner matrix pencils}, in Realization and Model Reduction of Dynamical Systems: {{A}} Festschrift in Honor of the 70th Birthday of Thanos Antoulas, C.~Beattie, P.~Benner, M.~Embree, S.~Gugercin, and S.~Lefteriu, eds., Springer International Publishing, Cham, 2022, pp.~59--78, \url{https://doi.org/10.1007/978-3-030-95157-3_4}.

\bibitem{FlaggBG2012IRKAConverge}
{\sc G.~Flagg, C.~Beattie, and S.~Gugercin}, {\em Convergence of the {{Iterative Rational Krylov Algorithm}}}, Systems \& Control Letters, 61 (2012), pp.~688--691, \url{https://doi.org/10.1016/j.sysconle.2012.03.005}.

\bibitem{GoseaGB2021QuadBT}
{\sc I.~V. Gosea, S.~Gugercin, and C.~Beattie}, {\em Data-driven balancing of linear dynamical systems}, SIAM Journal on Scientific Computing, 44 (2022), pp.~A554--A582, \url{https://doi.org/10.1137/21M1411081}.

\bibitem{GosWG24}
{\sc I.~V. Gosea, S.~Gugercin, and S.~W. Werner}, {\em Structured barycentric forms for interpolation-based data-driven reduced modeling of second-order systems}, Advances in Computational Mathematics, 50 (2024), pp.~1--32, \url{https://doi.org/10.1007/s10444-024-10118-7}.

\bibitem{GrivetTalocia2004TimeDomainVF}
{\sc S.~{Grivet-Talocia}}, {\em The {{Time-Domain Vector Fitting Algorithm}} for {{Linear Macromodeling}}}, AEU - International Journal of Electronics and Communications, 58 (2004), pp.~293--295, \url{https://doi.org/10.1078/1434-8411-54100245}.

\bibitem{GugercinAB2008H2}
{\sc S.~Gugercin, A.~C. Antoulas, and C.~Beattie}, {\em H2 {{Model Reduction}} for {{Large-Scale Linear Dynamical Systems}}}, SIAM Journal on Matrix Analysis and Applications, 30 (2008), pp.~609--638, \url{https://doi.org/10.1137/060666123}.

\bibitem{Gustavsen2006PoleRelocVF}
{\sc B.~Gustavsen}, {\em Improving the {{Pole Relocating Properties}} of {{Vector Fitting}}}, IEEE Transactions on Power Delivery, 21 (2006), pp.~1587--1592, \url{https://doi.org/10.1109/TPWRD.2005.860281}.

\bibitem{GustavsenS1998VFOrig}
{\sc B.~Gustavsen and A.~Semlyen}, {\em Simulation of transmission line transients using vector fitting and modal decomposition}, IEEE Transactions on Power Delivery, 13 (1998), pp.~605--614, \url{https://doi.org/10.1109/61.660941}.

\bibitem{Hokanson2017ProjNonlinLS}
{\sc J.~M. Hokanson}, {\em Projected {{Nonlinear Least Squares}} for {{Exponential Fitting}}}, SIAM Journal on Scientific Computing, 39 (2017), pp.~A3107--A3128, \url{https://doi.org/10.1137/16M1084067}.

\bibitem{hokansonM2018RationalLS}
{\sc J.~M. Hokanson and C.~C. Magruder}, {\em Least {{Squares Rational Approximation}}}, arXiv:1811.12590 [math],  (2018), \url{https://arxiv.org/abs/1811.12590}.

\bibitem{HylandB1985OptimalProj}
{\sc D.~Hyland and D.~Bernstein}, {\em The optimal projection equations for model reduction and the relationships among the methods of wilson, skelton, and moore}, IEEE Transactions on Automatic Control, 30 (1985), pp.~1201--1211, \url{https://doi.org/10.1109/TAC.1985.1103865}.

\bibitem{ipsenRefinedPerturbationBounds2009}
{\sc I.~C.~F. Ipsen and B.~Nadler}, {\em Refined {{Perturbation Bounds}} for {{Eigenvalues}} of {{Hermitian}} and {{Non-Hermitian Matrices}}}, SIAM Journal on Matrix Analysis and Applications, 31 (2009), pp.~40--53, \url{https://doi.org/10.1137/070682745}.

\bibitem{KutzBBP2016DMDBook}
{\sc J.~N. Kutz, S.~L. Brunton, B.~W. Brunton, and J.~L. Proctor}, {\em Dynamic {{Mode Decomposition}}: {{Data-Driven Modeling}} of {{Complex Systems}}}, {Society for Industrial and Applied Mathematics}, {Philadelphia, PA}, Nov. 2016, \url{https://doi.org/10.1137/1.9781611974508}.

\bibitem{LeinhardL2008HeatTransfer}
{\sc J.~H. Leinhard, IV and J.~H. Leinhard, V}, {\em A {{Heat Transfer Textbook}}}, {Phlogiston Press}, 2008.

\bibitem{MayoA2007LoewnerFramework}
{\sc A.~Mayo and A.~Antoulas}, {\em A framework for the solution of the generalized realization problem}, Linear Algebra and its Applications, 425 (2007), pp.~634--662, \url{https://doi.org/10.1016/j.laa.2007.03.008}.

\bibitem{MeierL1967InterpOptConds}
{\sc L.~Meier and D.~Luenberger}, {\em Approximation of linear constant systems}, IEEE Transactions on Automatic Control, 12 (1967), pp.~585--588, \url{https://doi.org/10.1109/TAC.1967.1098680}.

\bibitem{MliG23}
{\sc P.~Mlinari\'{c} and S.~Gugercin}, {\em {$\boldsymbol{\mathcal{L}}_2$}-optimal reduced-order modeling using parameter-separable forms}, SIAM Journal on Scientific Computing, 45 (2023), pp.~A554--A578, \url{https://doi.org/10.1137/22M1500678}.

\bibitem{Moore1981PrincipleComponentAnalysis}
{\sc B.~Moore}, {\em Principal component analysis in linear systems: {{Controllability}}, observability, and model reduction}, IEEE Transactions on Automatic Control, 26 (1981), pp.~17--32, \url{https://doi.org/10.1109/TAC.1981.1102568}.

\bibitem{NakatsukasaST2018AAA}
{\sc Y.~Nakatsukasa, O.~S{\`e}te, and L.~N. Trefethen}, {\em The {{AAA Algorithm}} for {{Rational Approximation}}}, SIAM Journal on Scientific Computing, 40 (2018), pp.~A1494--A1522, \url{https://doi.org/10.1137/16M1106122}.

\bibitem{SymetricEigenProb_Bounds}
{\sc B.~N. Parlett}, {\em The symmetric eigenvalue problem}, SIAM, 1998.

\bibitem{QianPW2020Lift&Learn}
{\sc E.~Qian, B.~Kramer, B.~Peherstorfer, and K.~Willcox}, {\em Lift \& {{Learn}}: {{Physics-informed}} machine learning for large-scale nonlinear dynamical systems}, Physica D: Nonlinear Phenomena, 406 (2020), p.~132401, \url{https://doi.org/10.1016/j.physd.2020.132401}.

\bibitem{QuarteroniMN2015RedBasisBook}
{\sc A.~Quarteroni, A.~Manzoni, and F.~Negri}, {\em Reduced {{Basis Methods}} for {{Partial Differential Equations}}}, Springer Cham, July 2015.

\bibitem{RajendraB2020DLROM}
{\sc P.~Rajendra and V.~Brahmajirao}, {\em Modeling of dynamical systems through deep learning}, Biophysical Reviews, 12 (2020), pp.~1311--1320, \url{https://doi.org/10.1007/s12551-020-00776-4}.

\bibitem{RozzaHP2008ReducedBasis}
{\sc G.~Rozza, D.~B.~P. Huynh, and A.~T. Patera}, {\em Reduced {{Basis Approximation}} and a {{Posteriori Error Estimation}} for {{Affinely Parametrized Elliptic Coercive Partial Differential Equations}}: {{Application}} to {{Transport}} and {{Continuum Mechanics}}}, Archives of Computational Methods in Engineering, 15 (2008), pp.~229--275, \url{https://doi.org/10.1007/s11831-008-9019-9}.

\bibitem{Schmid2022DMD}
{\sc P.~J. Schmid}, {\em Dynamic {{Mode Decomposition}} and {{Its Variants}}}, Annual Review of Fluid Mechanics, 54 (2022), pp.~225--254, \url{https://doi.org/10.1146/annurev-fluid-030121-015835}.

\bibitem{semlyenG1999VF}
{\sc A.~Semlyen and B.~Gustavsen}, {\em Rational {{Approximation}} of {{Frequency Domain Responses By Vector Fitting}}}, IEEE Transactions on Power Delivery, 14 (1999), pp.~1052--1061.

\bibitem{morwiki_linad}
{\sc {The MORwiki Community}}, {\em Linear advection}.
\newblock hosted at {MORwiki} -- Model Order Reduction Wiki, 2019, \url{http://modelreduction.org/index.php/Linear_Advection}.

\bibitem{vanWaardeECT2023DataInfSurvey}
{\sc H.~J. Van~Waarde, J.~Eising, M.~K. Camlibel, and H.~L. Trentelman}, {\em The informativity approach: To data-driven analysis and control}, IEEE Control Systems Magazine, 43 (2023), pp.~32--66, \url{https://doi.org/10.1109/MCS.2023.3310305}.

\bibitem{VanWaardeETC2020DataInform}
{\sc H.~J. {van Waarde}, J.~Eising, H.~L. Trentelman, and M.~K. Camlibel}, {\em Data informativity: A new perspective on data-driven analysis and control}, IEEE-Transactions on Automatic Control, 65 (2020), \url{https://doi.org/10.1109/TAC.2020.2966717}.

\bibitem{weylAsymptotischeVerteilungsgesetzEigenwerte1912}
{\sc H.~Weyl}, {\em Das asymptotische {{Verteilungsgesetz}} der {{Eigenwerte}} linearer partieller {{Differentialgleichungen}} (mit einer {{Anwendung}} auf die {{Theorie}} der {{Hohlraumstrahlung}})}, Mathematische Annalen, 71 (1912), pp.~441--479, \url{https://doi.org/10.1007/BF01456804}.

\bibitem{wilson1970LyapunovConds}
{\sc D.~Wilson}, {\em Optimum solution of model-reduction problem}, Proceedings of the Institution of Electrical Engineers, 117 (1970), p.~1161, \url{https://doi.org/10.1049/piee.1970.0227}.

\end{thebibliography}

\end{document}